\newtheorem{thm}{Theorem}[section]
\newtheorem{cor}[thm]{Corollary}
\newtheorem{lem}[thm]{Lemma}
\theoremstyle{definition}
\newtheorem{defin}[thm]{Definition}
\newtheorem{rem}[thm]{Remark}
\newtheorem{exa}[thm]{Example}
\numberwithin{equation}{section}
\author[M. Matsuno]{Masaki Matsuno}
\address{
	Graduate School of Integrated Science and Technology, 
	Shizuoka University\\
	Ohya 836, Shizuoka 422-8529, Japan}
\email{matsuno.masaki.14@shizuoka.ac.jp}
\keywords {Twisting systems, Twisted algebras, Geometric algebras, AS-regular algebras}
\subjclass[2020]{14A22, 16S38, 16W50.}
\thanks{The author was supported by Research Fellowships of the Japan Society for the Promotion of Science for Young Scientists
	(No. 21J11303).}
\newcommand{\PGl}{{\rm PGL}}
\newcommand{\Aut}{{\rm Aut}}
\newcommand{\GrAut}{{\rm GrAut}}
\newcommand{\Ext}{{\rm Ext}}
\newcommand{\Ker}{{\rm Ker\,}}
\newcommand{\ddPAut}{\Aut_{k}(\mathbb{P}^{2} \downarrow E)}
\newcommand{\uuPAut}{\Aut_{k}(E \uparrow \mathbb{P}^{2})}
\newcommand{\N}{N(E,\sigma)}
\newcommand{\M}{M(E,\sigma)}
\newcommand{\Z}{Z(E,\sigma)}
\newcommand{\TS}{{\rm TS}^{\mathbb{Z}}}
\newcommand{\TSo}{{\rm TS}^{\mathbb{Z}}_{0}}
\newcommand{\TSa}{{\rm TS}^{\mathbb{Z}}_{\rm alg}}
\newcommand{\GrMod}{{\rm GrMod}}
\newcommand{\al}{\alpha}
\newcommand{\be}{\beta}
\newcommand{\ga}{\gamma}
\newcommand{\la}{\lambda}
\newcommand{\ep}{\varepsilon}
\begin{document}
	\title[Twisted algebras of geometric algebras]
	{Twisted algebras of geometric algebras}
	\begin{abstract}
	A twisting system is one of the major tools to study graded algebras, however,
	it is often difficult to construct a (non-algebraic) twisting system if a graded algebra
	is given by generators and relations.
	In this paper, we show that a twisted algebra of a geometric algebra
	is determined by a certain automorphism
	of its point variety. As an application, we classify twisted algebras of $3$-dimensional geometric Artin-Schelter regular
	algebras up to graded algebra isomorphism.
	\end{abstract}
	\maketitle
	\section{Introduction}
	The notion of twisting system was introduced by Zhang in \cite{Z}.
	If there is a twisting system $\theta=\{ \theta_n \}_{n \in \mathbb{Z}}$ for a graded algebra $A$,
	then we can define a new graded algebra $A^{\theta}$, called a twisted algebra.
	Zhang gave a necessary and sufficient algebraic condition via a twisting system
	when two categories of graded right modules
	are equivalent (\cite[Theorem 3.5]{Z}).
	Although a twisting system plays an important role to study a graded algebra,
	it is often difficult to construct a twisting system on a graded algebra
	if it is given by generators and relations.
	
	Mori introduced the notion of geometric algebra $\mathcal{A}(E,\sigma)$
	which is determined by a geometric data
	which consists of a projective variety $E$, called the point variety, and its automorphism $\sigma$.
	For these algebras, Mori gave a necessary and sufficient geometric condition when two categories of graded right modules
	are equivalent (\cite[Theorem 4.7]{Mo}).
	By using this geometric condition, we can easily construct a twisting system.
	
	Cooney and Grabowski defined a groupoid whose objects are geometric noncommutative projective spaces
	and whose morphisms are isomorphisms between them.
	By studying a correspondence between the morphisms of this groupoid and a twisting system,
	they showed that the morphisms of this groupoid are parametrized by a set of certain automorphisms of the point variety
	(\cite[Theorem 28]{CG}).
	
	In this paper, we focus on studying a twisted algebra of a geometric algebra $A=\mathcal{A}(E,\sigma)$.
	For a twisting system $\theta$ on $A$,
	we set $\Phi(\theta):=\overline{(\theta_{1}|_{A_{1}})^{\ast}} \in \Aut_{k}\,\mathbb{P}(A_1^{\ast})$
	by dualization and projectivization.
	We find a subset $M(E,\sigma)$ of $\Aut_{k}\,\mathbb{P}(A_{1}^{\ast})$ parametrizing twisted algebras of $A$
	up to isomorphism.
	As an application to $3$-dimensional geometric Artin-Schelter regular algebras,
	we will compute $M(E,\sigma)$ (see Theorem \ref{thm.listNEC} and Theorem \ref{thm.listEC}),
	which completes the classification of twisted algebras of $3$-dimensional geometric Artin-Schelter regular algebras.
	
	Itaba and the author showed that for any $3$-dimensional quadratic Artin-Schelter regular algebra $B$,
	there are a $3$-dimensional quadratic Calabi-Yau Artin-Schelter regular algebra $S$ 
	and a twisting system $\theta$ such that $B \cong S^{\theta}$ (\cite[Theorem 4.4]{IM2}).
	Except for one case, a twisting system $\theta$ can be induced by a graded algebra automorphism of $S$.
	By using $M(E,\sigma)$,
	we can recover this fact in the case that $B$ is geometric (see Corollary \ref{thm.main}).

	\section{Preliminary}
	Throughout this paper, we fix an algebraically closed field $k$ of characteristic zero and
	assume that a graded algebra is an $\mathbb{N}$-graded algebra $A=\bigoplus_{i \in \mathbb{N}} A_i$ over $k$.
	A graded algebra $A=\bigoplus_{i \in \mathbb{N}} A_i$ is called {\it connected} if $A_0=k$. 
	Let $\GrAut_{k}\,A$ denote the group of graded algebra automorphisms of $A$.
	We denote by $\GrMod A$ the category of graded right $A$-modules.
	We say that two graded algebras $A$ and $A'$ are {\it graded Morita equivalent} if two categories $\GrMod A$ and $\GrMod A'$
	are equivalent.
	
	\subsection{Twisting systems and twisted algebras}
	\begin{defin}
		Let $A$ be a graded algebra. A set of graded $k$-linear automorphisms $\theta=\{ \theta_n \}_{n \in \mathbb{Z}}$
		of $A$ is called a twisting system on $A$ if
		$$
		\theta_n(a\theta_m(b))=\theta_n(a)\theta_{n+m}(b)
		$$
		for any $l,m,n \in \mathbb{Z}$ and $a \in A_m$, $b \in A_l$.
		The twisted algebra of $A$ by $\theta$, denoted by $A^{\theta}$, is a graded algebra $A$
		with a new multiplication $\ast$ defined by
		$$a \ast b =a\theta_m(b)$$
		for any $a \in A_m$, $b \in A_l$.
		A twisting system $\theta=\{ \theta_n \}_{n \in \mathbb{Z}}$ is called {\it algebraic}
		if $\theta_{m+n}=\theta_m \circ \theta_n$ for every $m,n \in \mathbb{Z}$.
	\end{defin}
    
    We denote by $\TS(A)$ the set of twisting systems on $A$.
    Zhang \cite{Z} found a necessary and sufficient algebraic condition for $\GrMod A \cong \GrMod A'$.
    
    \begin{thm}[{\cite[Theorem 3.5]{Z}}]
    	Let $A$ and $A'$ be graded algebras finitely generated in degree $1$ over $k$. Then
    	$\GrMod A \cong \GrMod A'$ if and only if $A'$ is isomorphic to a twisted algebra $A^{\theta}$
    	by a twisting system $\theta \in \TS(A)$.
    \end{thm}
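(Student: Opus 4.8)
The plan is to prove the two implications separately. For the ``if'' direction, suppose $\theta=\{\theta_n\}_{n\in\mathbb Z}$ is a twisting system on $A$ and $A'$ is isomorphic to $A^{\theta}$; it suffices to show $\GrMod A\cong\GrMod A^{\theta}$. I would do this by hand: define a functor $(-)^{\theta}\colon\GrMod A\to\GrMod A^{\theta}$ sending a graded module $M=\bigoplus_i M_i$ to the same graded vector space equipped with the new action $m\ast a:=m\,\theta_i(a)$ for $m\in M_i$ and $a\in A$, and acting as the identity on morphisms. The twisting-system identity $\theta_n(a\theta_m(b))=\theta_n(a)\theta_{n+m}(b)$ is precisely what makes $\ast$ an associative $A^{\theta}$-action, so the functor is well defined, and twisting the $A^{\theta}$-action back through the pointwise inverses $\theta_n^{-1}$ produces a quasi-inverse. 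Hence $(-)^{\theta}$ is an equivalence, and composing with $\GrMod A^{\theta}\cong\GrMod A'$ (induced by a graded algebra isomorphism) finishes this direction.

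For the converse, start from an equivalence $F\colon\GrMod A\to\GrMod A'$. First note that ``finitely generated in degree $1$ over $k$'' forces $A_0=A'_0=k$, so $A$ and $A'$ are connected graded and every finitely generated graded projective module is graded-free. The object $P:=F(A)$ is a finitely generated projective generator of $\GrMod A'$ with $\operatorname{End}_{\GrMod A'}(P)\cong\operatorname{End}_{\GrMod A}(A)=A_0=k$; writing $P\cong\bigoplus_j A'(n_j)$, the identity maps of the summands are nonzero orthogonal idempotents in the field $\operatorname{End}_{\GrMod A'}(P)$, so there is exactly one summand and $P\cong A'(m)$ for a single $m\in\mathbb Z$. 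Replacing $F$ by $s_{A'}^{-m}\circ F$, where $s_{A'}$ is the shift autoequivalence $N\mapsto N(1)$, we may assume $F(A)\cong A'$. The same argument applied to each $A(n)$ gives $F(A(n))\cong A'(\phi(n))$ for a function $\phi$, which running the argument for $F^{-1}$ shows is a bijection of $\mathbb Z$ with $\phi(0)=0$; since $\Hom_{\GrMod A'}(A'(\phi(i)),A'(\phi(i+1)))\cong\Hom_{\GrMod A}(A(i),A(i+1))\cong A_1\neq 0$ (assuming $A\neq k$, the other case being trivial) while $\Hom_{\GrMod A'}(A'(a),A'(b))=A'_{b-a}$ vanishes for $b<a$, the bijection $\phi$ is strictly increasing, hence $\phi=\mathrm{id}$. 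Thus $F(A(n))\cong A'(n)$ for all $n\in\mathbb Z$, and we fix isomorphisms $\iota_n\colon F(A(n))\xrightarrow{\ \sim\ }A'(n)$.

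Finally, both algebras can be reconstructed from their module categories: $A\cong\bigoplus_n\Hom_{\GrMod A}(A,s_A^nA)$ as graded algebras, with product given by composition of morphisms and the shift $s_A$, and likewise for $A'$ with $s_{A'}$. Applying $F$ and conjugating by the $\iota_n$ turns the system of Hom-spaces $\Hom_{\GrMod A}(A(i),A(j))\cong A_{j-i}$ into a compatible system of $k$-linear isomorphisms $A_{j-i}\xrightarrow{\ \sim\ }A'_{j-i}$ — equivalently, an isomorphism between the associated $\mathbb Z$-indexed algebras of $A$ and of $A'$. The degree-$n$ components of this system assemble to a graded vector space isomorphism $A\to A'$, and the failure of $F$ to intertwine the two shift functors (equivalently, of this isomorphism to respect multiplication) is packaged, using that $A$ is generated in degree $1$, into a twisting system $\theta$ on $A$ together with a graded algebra isomorphism $A^{\theta}\cong A'$. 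I expect this last step to be the main obstacle: extracting the $\theta_n$ coherently from the autoequivalence $s_{A'}^{-1}\circ F\circ s_A\circ F^{-1}$ (which fixes every $A'(n)$) and checking that the functoriality of $F$ translates exactly into the twisting-system identity. It is here that generation in degree $1$ is essential, since that is what guarantees that the objects $A'(n)$ and the morphisms among them recover all of the algebra structure.
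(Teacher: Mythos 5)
The paper gives no proof of this statement---it is quoted verbatim from Zhang---so I can only assess your argument on its own terms. Your ``if'' direction is correct and complete at the level of detail given: the twisted-module functor $M\mapsto M^{\theta}$ is exactly how this implication is proved, and the check that $\{\theta_n^{-1}\}$ is a twisting system on $A^{\theta}$ with $(A^{\theta})^{\theta^{-1}}=A$ does go through. In the ``only if'' direction your normalization is also essentially right: $F(A)$ is a finitely generated graded projective, hence graded free since $A'$ is connected, and $\operatorname{End}_{\GrMod A'}(F(A))\cong k$ forces $F(A)\cong A'(m)$; the monotonicity argument then pins down $F(A(n))\cong A'(n)$. (Minor quibble: $A$ is \emph{not} a generator of $\GrMod A$---$\Hom_{\GrMod A}(A,M)=M_0$ can vanish for $M\neq 0$---but you never actually use that property, only projectivity and finite generation, so nothing breaks.)

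The genuine gap is the final paragraph. The entire content of Zhang's theorem is the step you defer: producing the maps $\theta_n$ from the normalized equivalence, verifying the identity $\theta_n(a\theta_m(b))=\theta_n(a)\theta_{n+m}(b)$, and exhibiting a graded algebra isomorphism $A^{\theta}\cong A'$. You write that the failure of $F$ to intertwine the shifts ``is packaged \ldots into a twisting system'' and that you ``expect this last step to be the main obstacle''---that is an accurate self-diagnosis, but it means the theorem is asserted rather than proved. Concretely: the equivalence gives compatible isomorphisms $g_{ij}\colon A_{j-i}\to A'_{j-i}$ satisfying $g_{ik}(ab)=g_{ij}(a)g_{jk}(b)$ (an isomorphism of the associated $\mathbb{Z}$-indexed algebras), and the naive guess $\theta_n|_{A_m}:=g_{0,m}^{-1}\circ g_{n,n+m}$ does \emph{not} obviously satisfy the twisting identity, because $g_{0,\bullet}$ is not multiplicative; making this work (this is Zhang's Theorems 3.4--3.5, or the Bondal--Polishchuk/Sierra $\mathbb{Z}$-algebra argument, and it is also precisely where generation in degree $1$ enters) is the part that still has to be written down. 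Until it is, the ``only if'' direction is a correct reduction followed by a citation-shaped hole.
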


    \begin{defin}
    	For a graded algebra $A$, we define
    	\begin{align*}
    		&\TSo(A):=\{ \theta \in \TS(A) \mid \theta_0={\rm id}_A \} \\
    		&{\rm TS}_{\rm alg}^{\mathbb{Z}}(A):=\{ \theta \in \TSo(A) \mid \theta \text{ is algebraic } \} \\
    		&{\rm Twist}(A):=\{ A^{\theta} \mid \theta \in \TS(A) \}/_{\cong}\\
    		&{\rm Twist}_{\rm alg}(A):=\{ A^{\theta} \mid \theta \in \TSa(A) \}/_{\cong}.
    	\end{align*}
    \end{defin}

    \begin{lem}[{\cite[Proposition 2.4]{Z}}]\label{lem.tso}
    	Let $A$ be a graded algebra. For every $\theta \in \TS(A)$,
    	there exists $\theta' \in \TSo(A)$ such that $A^{\theta} \cong A^{\theta'}$.
    \end{lem}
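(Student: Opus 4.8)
The plan is to normalize $\theta$ by left multiplication by the unit $u:=\theta_0(1)$. The first step, which is the only one requiring an idea, is to identify $\theta_0$. Putting $n=0$ in the defining identity gives $\theta_0(a\theta_m(b))=\theta_0(a)\theta_m(b)$ for all $m,l$ and all $a\in A_m$, $b\in A_l$; since each $\theta_m$ restricts to a bijection of $A_l$, every homogeneous element of $A$ has the form $\theta_m(b)$ for a suitable $b$ of the same degree, so $\theta_0(ac)=\theta_0(a)c$ for homogeneous $a,c$, hence for all $a,c\in A$ by linearity. Taking $a=1$ shows $\theta_0=L_u$, left multiplication by $u=\theta_0(1)\in A_0$; since $\theta_0$ is a graded bijection, $u$ has a right inverse in $A_0$ and is not a left zero-divisor, so $u\in A_0$ is a unit and $\theta_0^{-1}=L_{u^{-1}}$. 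The same identity with $a=1$, $b=u^{-1}$ also gives $\theta_n(1)\theta_n(u^{-1})=\theta_n(1)$, and $\theta_n(1)$ is not a left zero-divisor (again because $\theta_n$ is bijective), so $\theta_n(u^{-1})=1$ for every $n$.

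I would then set $\theta'_n:=\theta_n\circ L_{u^{-1}}$, that is, $\theta'_n(z)=\theta_n(u^{-1}z)$. Each $\theta'_n$ is a composite of graded $k$-linear automorphisms, $\theta'_0=\theta_0\circ L_{u^{-1}}=L_u\circ L_{u^{-1}}={\rm id}_A$, and $\theta'_n(1)=\theta_n(u^{-1})=1$ (so $1$ is the identity of $A^{\theta'}$). The twisting-system axiom for $\theta'$ is then immediate from that for $\theta$: for $a\in A_m$, $b\in A_l$,
\[ \theta'_n\bigl(a\,\theta'_m(b)\bigr)=\theta_n\bigl(u^{-1}a\cdot\theta_m(u^{-1}b)\bigr)=\theta_n(u^{-1}a)\,\theta_{n+m}(u^{-1}b)=\theta'_n(a)\,\theta'_{n+m}(b), \]
the middle step being the axiom for $\theta$ applied to $u^{-1}a\in A_m$ and $u^{-1}b\in A_l$. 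Hence $\theta'\in\TSo(A)$.

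It then remains to check that $L_{u^{-1}}\colon A^{\theta'}\to A^{\theta}$ is an isomorphism of graded algebras. It is a graded linear bijection with inverse $L_u$, and, writing $\ast$ and $\ast'$ for the products on $A^{\theta}$ and $A^{\theta'}$, one computes $L_{u^{-1}}(a\ast'b)=u^{-1}(a\,\theta'_m(b))=(u^{-1}a)\,\theta_m(u^{-1}b)=L_{u^{-1}}(a)\ast L_{u^{-1}}(b)$ for $a\in A_m$; being a multiplicative bijection it automatically preserves identities. Therefore $A^{\theta}\cong A^{\theta'}$ with $\theta'\in\TSo(A)$. The only real obstacle in all of this is the first deduction — that the lone twisting-system axiom already forces $\theta_0$ to be left multiplication by a unit of $A_0$; once that is in hand, the construction of $\theta'$ and the verifications are routine bookkeeping.
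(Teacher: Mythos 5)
Your argument is correct. Note that the paper does not prove this lemma at all---it is quoted verbatim from Zhang \cite[Proposition 2.4]{Z}---and what you have written is essentially a self-contained reconstruction of Zhang's normalization: once you know $\theta_0=L_u$ with $u=\theta_0(1)$, your $\theta'_n=\theta_n\circ L_{u^{-1}}$ is exactly Zhang's $\theta_n\circ\theta_0^{-1}$, and the identification of $\theta_0$ as left multiplication by a unit of $A_0$ is precisely what makes the verification of the twisting axiom for $\theta'$ go through. The one place where your justification is too quick is the parenthetical claim that $\theta_n(1)$ is not a left zero-divisor ``again because $\theta_n$ is bijective'': bijectivity of $\theta_n$ alone does not control the element $\theta_n(1)$ (a graded linear bijection of $A$ can perfectly well send $1$ to a zero-divisor of $A_0$). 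What you actually need is the $m=0$, $a=1$ instance of the axiom, $\theta_n(\theta_0(b))=\theta_n(1)\theta_n(b)$, which exhibits $L_{\theta_n(1)}=\theta_n\circ\theta_0\circ\theta_n^{-1}$ as a bijection. Alternatively you can delete that step altogether: once $\theta'\in\TSo(A)$ is established, the same instance of the axiom for $\theta'$ with $\theta'_0=\mathrm{id}_A$ gives $\theta'_n(1)c=c$ for all $c$, hence $\theta'_n(1)=1$ and $1$ is the identity of $A^{\theta'}$. With that repair the proof is complete.
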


    It follows from Lemma \ref{lem.tso} that ${\rm Twist}(A)=\{ A^{\theta} \mid \theta \in \TSo(A) \}/_{\cong}$,
    so we may assume that $\theta \in \TSo(A)$ to study ${\rm Twist}(A)$.
    By the definition of twisting system,
    it follows that $\theta \in \TSa(A)$ if and only if $\theta_n=\theta_{1}^{n}$ for every $n \in \mathbb{Z}$ and
    $\theta_{1} \in \GrAut_{k}\,A$, so
    $${\rm Twist}_{\rm alg}(A)=\{ A^{\phi} \mid \phi \in \GrAut_{k}\,A \}/_{\cong}$$
    where $A^{\phi}$ means the twisted algebra of $A$ by $\{ \phi^{n} \}_{n \in \mathbb{Z}}$.
    
	\subsection{Geometric algebra}
	Let $V$ be a finite dimensional $k$-vector space and $A=T(V)/(R)$ be a quadratic algebra
	where $T(V)$ is a tensor algebra over $k$ and $R \subset V \otimes V$.
	Since an element of $R$ defines a multilinear function on $V^{\ast}\times V^{\ast}$,
	we can define a zero set associated to $R$ by
	$$\mathcal{V}(R)=\{
	(p,q) \in \mathbb{P}(V^{\ast}) \times \mathbb{P}(V^{\ast}) \mid g(p,q)=0 \text{ for any } g \in R
	\}.$$
	
	\begin{defin}\label{defin.geom}
		Let $A=T(V)/(R)$ be a quadratic algebra. A geometric pair $(E,\sigma)$ consists of a projective variety $E \subset \mathbb{P}(V^{\ast})$
		and $\sigma \in \Aut_{k} E$.
		\begin{enumerate}[{\rm (1)}]
			\item We say that $A$ satisfies (G1) if there exists a geometric pair $(E,\sigma)$ such that
			$$\mathcal{V}(R)=\{
			(p,\sigma(p)) \in \mathbb{P}(V^{\ast}) \times \mathbb{P}(V^{\ast}) \mid p \in E
			\}.$$
			In this case, we write $\mathcal{P}(A)=(E,\sigma)$, and call $E$ the {\it point variety} of $A$.
			
			\item We say that $A$ satisfies (G2) if there exists a geometric pair $(E,\sigma)$ such that
			$$R=\{
			g \in V \otimes V \mid g(p,\sigma(p))=0 \text{ for all } p \in E
			\}.$$
			In this case, we write $A=\mathcal{A}(E,\sigma)$.
			
			\item We say that $A$ is a geometric algebra if it satisfies both (G1) and (G2) with $A=\mathcal{A}(\mathcal{P}(A))$.
		\end{enumerate}
	\end{defin}

    For geometric algebras, Mori \cite{Mo} found a necessary and sufficient 
    geometric condition for $\GrMod A \cong \GrMod A'$. 
    
    \begin{thm}[{\cite[Theorem 4.7]{Mo}}]\label{thm.Mori}
    Let $A=\mathcal{A}(E,\sigma)$ and $A'=\mathcal{A}(E',\sigma')$ be geometric algebras.
    Then $\GrMod A \cong \GrMod A'$ if and only if there exists a sequence of automorphisms $\{ \tau_n \}_{n \in \mathbb{Z}}$
    of $\mathbb{P}(V^{\ast})$ for $n \in \mathbb{Z}$, each of which sends $E$ isomorphically onto $E'$, such that the diagram
    $$\xymatrix{
    	E\ar[r]^-{\tau_n|_E}\ar[d]_-{\sigma}&E'\ar[d]^-{\sigma'}\\
    	E\ar[r]_-{\tau_{n+1}|_E}&E'
    }$$
    commutes for every $n \in \mathbb{Z}$.
    \end{thm}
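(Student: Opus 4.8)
The plan is to derive the statement from Zhang's algebraic criterion \cite[Theorem~3.5]{Z} by converting a twisting system into geometric data. Combining that criterion with Lemma~\ref{lem.tso}, we have $\GrMod A \cong \GrMod A'$ if and only if there exist $\theta \in \TSo(A)$ and a graded algebra isomorphism $\psi\colon A^{\theta} \to A'$. Hence it is enough to match such pairs $(\theta,\psi)$ with sequences $\{\tau_n\}_{n\in\mathbb{Z}}$ as in the statement. Throughout write $V = A_1 = A^{\theta}_1 = A'_1$, $\alpha_n := \theta_n|_V \in \Gl(V)$ and $\psi_1 := \psi|_V \in \Gl(V)$; since $A$, $A^{\theta}$ and $A'$ are quadratic, everything is controlled by the degree-one and degree-two components.

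For the forward direction, given $(\theta,\psi)$ put $\tau_n \in \Aut_k\mathbb{P}(V^{\ast})$ equal to the projectivization of $(\psi_1^{-1})^{\ast}\circ\alpha_n^{\ast}$. Three things then have to be checked, each by a direct computation with the zero-set operator $\mathcal{V}(-)$, dualization and Definition~\ref{defin.geom}. First, because $A^{\theta}$ is quadratic its defining relations are $R^{\theta} = (\mathrm{id}_V\otimes\alpha_1)^{-1}(R)$, so $\mathcal{V}(R^{\theta}) = \{(p,\overline{\alpha_1^{\ast}}\,\sigma(p)) \mid p\in E\}$, and $R' = (\psi_1\otimes\psi_1)(R^{\theta})$; reading this off gives $\tau_0(E) = E'$ and exhibits $\sigma'$ as the corresponding conjugate of $\overline{\alpha_1^{\ast}}\circ\sigma$. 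Second, specializing the twisting-system identity $\theta_n(a\,\theta_m(b)) = \theta_n(a)\,\theta_{n+m}(b)$ to $a,b\in V$ (so $m=l=1$) and passing it through $\mathcal{V}(-)$ yields $(\alpha_n\otimes\alpha_{n+1}\alpha_1^{-1})(R) = R$ for every $n$; geometrically this says that each $\overline{\alpha_n^{\ast}}$ restricts to an automorphism of $E$, whence $\tau_n(E) = E'$, and in addition $\overline{\alpha_1^{\ast}}\circ\sigma\circ\overline{\alpha_n^{\ast}} = \overline{\alpha_{n+1}^{\ast}}\circ\sigma$ on $E$. Third, combining the two previous points is precisely the commutativity of the square $\sigma'\circ\tau_n|_E = \tau_{n+1}|_E\circ\sigma$.

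For the converse one reverses this and must construct an actual twisting system. Given the $\tau_n$, lift each to $g_n \in \Gl(V^{\ast})$ (possible since $\Aut_k\mathbb{P}(V^{\ast}) = \PGl(V^{\ast})$), set $\psi_1 := (g_0^{\ast})^{-1}$ and $\alpha_n := (g_0^{-1}g_n)^{\ast} \in \Gl(V)$, so that $\alpha_0 = \mathrm{id}_V$. Extend the $\alpha_n$ to a family $\theta = \{\theta_n\}_{n\in\mathbb{Z}}$ of graded $k$-linear maps of $A$ by the cocycle rule
$$
\theta_n(v_1 v_2\cdots v_m) = \alpha_n(v_1)\,(\alpha_{n+1}\alpha_1^{-1})(v_2)\cdots(\alpha_{n+m-1}\alpha_{m-1}^{-1})(v_m) \qquad (v_i\in V).
$$
The key point is that this descends to $A = T(V)/(R)$: because $(R)$ is generated in degree two, well-definedness reduces to the identities $(\alpha_n\otimes\alpha_{n+1}\alpha_1^{-1})(R) = R$ for all $n$ (the occurrences of $R$ at higher positions following formally from these), and these are exactly the commuting squares translated back through $\mathcal{V}(-)$. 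One then checks that $\theta$ satisfies the twisting-system axiom and lies in $\TSo(A)$, and finally that $A^{\theta}\cong A'$: the degree-two relations of $A^{\theta}$ are $(\mathrm{id}_V\otimes\alpha_1)^{-1}(R)$, their image under $\mathcal{V}(-)$ is carried onto $\mathcal{V}(R') = \{(p,\sigma'(p))\mid p\in E'\}$ by the product automorphism $\tau_0\times\tau_1$ (the $n=0$ square), and the two relation spaces have the same dimension; since $A'$ satisfies (G2), $\psi_1$ induces the desired isomorphism.

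The step I expect to be the main obstacle is this last construction: showing that the cocycle rule is genuinely well defined on $A$ and that the family $\theta$ obtained is a twisting system in every degree, while keeping the normalizations $\alpha_0 = \mathrm{id}_V$ and $(\alpha_n\otimes\alpha_{n+1}\alpha_1^{-1})(R) = R$ mutually consistent. The remaining ingredients — moving back and forth through $\mathcal{V}(-)$, dualization, and the conditions (G1) and (G2) — are a sequence of essentially formal computations.
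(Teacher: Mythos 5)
This theorem is quoted from \cite[Theorem 4.7]{Mo} and the paper itself gives no proof of it; your argument reconstructs essentially the standard one --- Zhang's criterion \cite{Z} plus the dictionary between twisting systems and sequences of projective automorphisms read off through $\mathcal{V}(-)$, (G1) and (G2), which is exactly the mechanism the paper invokes via \cite[Remark 15]{CG} in Lemma~\ref{Mtwist} --- and it is correct. The only point you gloss over is that $A^{\theta}$ is again presented by the quadratic relations $(\mathrm{id}_V\otimes\alpha_1)^{-1}(R)$ in \emph{all} degrees (so that $\psi_1$ really induces an isomorphism, not just a map on generators and degree-two relations); this follows from the identities $(\alpha_{j-1}\otimes\alpha_j\alpha_1^{-1})(R)=R$ you have already established, or can simply be quoted from \cite{Z}.
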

    
    \subsection{Artin-Schelter regular algebras and Calabi-Yau algebras}
    
    \begin{defin}
    	A connected graded algebra $A$ is called a {\it $d$-dimensional Artin-Schelter regular algebra} (simply {\it AS-regular algebra})
    	if $A$ satisfies the following conditions;
    	\begin{enumerate}[{\rm (1)}]
    		\item ${\rm gldim} A=d<\infty$,
    		\item ${\rm GKdim} A:=\inf\{ \al \in \mathbb{R} \mid \dim_{k}(\sum_{i=0}^{n}A_i) \leq n^{\al} \text{ for all } n\gg0 \} <\infty$, and,
    		\item $\Ext^{i}_{A}(k,A)=
    		\begin{cases}
    			k \,\,\,\text{ if } i=d, \\
    			0 \,\,\,\text{ if } i \neq d.
    		\end{cases}$
    	\end{enumerate}
    \end{defin}
    
    Artin and Schelter proved that a $3$-dimensional AS-regular algebra $A$ finitely generated in degree $1$ is
    isomorphic to one of the following forms:
    $$k\langle x,y,z \rangle/(f_1,f_2,f_3) \text{ or } k\langle x,y \rangle/(g_1,g_2)$$
    where $f_i$ are homogeneous polynomials of degree $2$ (the quadratic case) and
    $g_j$ are homogeneous polynomials of degree $3$ (the cubic case) (see \cite[Theorem 1.5]{AS}).
    
    We recall the definition of Calabi-Yau algebra introduced by \cite{G}.
    
    \begin{defin}[{\cite{G}}]
    	A $k$-algebra $S$ is called {\it $d$-dimensional Calabi-Yau}
    	if $S$ satisfies the following conditions:
    	\begin{enumerate}
    		\item ${\rm pd}_{S^e}\,S=d<\infty$, and
    		\item ${\rm Ext}^i_{S^e}(S,S^e) \cong
    		\begin{cases}
    			S \,\,\,\text{ if } i=d, \\
    			0 \,\,\,\text{ if } i \neq d
    		\end{cases}$ (as right $S^e$-modules)
    	\end{enumerate} 
        where $S^e=S^{\rm op} \otimes_{k} S$ is the enveloping algebra of $S$.
    \end{defin}
     
    The following theorem tells us that we may assume that $3$-dimensional quadratic AS-regular algebra
    is Calabi-Yau up to graded Morita equivalence.
    
    \begin{thm}[{\cite[Theorem 4.4]{IM2}}]
    	For every $3$-dimensional quadratic AS-regular algebra $A$,
    	there exists a $3$-dimensional quadratic Calabi-Yau AS-regular algebra $S$
    	such that $\GrMod\,A \cong \GrMod\,S$.
    \end{thm}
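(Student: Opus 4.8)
The plan is to transport the statement to the geometric side via the point-scheme construction, and then to dispatch the finitely many cases in the classification of $3$-dimensional quadratic AS-regular algebras.

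First I would reduce to geometric pairs. By the point-scheme construction of Artin--Tate--Van den Bergh (together with the verification that the defining relations are exactly the multilinear forms cut out by the point scheme), every $3$-dimensional quadratic AS-regular algebra $A$ finitely generated in degree $1$ is, up to graded isomorphism, a geometric algebra $\mathcal{A}(E,\sigma)$ whose point variety $E \subseteq \mathbb{P}^2$ is either $\mathbb{P}^2$ itself or a divisor of degree $3$, with $\sigma \in \Aut_k E$. It then suffices to produce, for each such pair, an automorphism $\sigma'$ of $E$ such that $\mathcal{A}(E,\sigma')$ is Calabi--Yau and $\GrMod\,\mathcal{A}(E,\sigma) \cong \GrMod\,\mathcal{A}(E,\sigma')$, the latter being witnessed by a twisting system through Zhang's theorem.

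Next I would set up two ingredients. On the one hand, a $3$-dimensional quadratic AS-regular algebra is Koszul and is recovered from a regular twisted superpotential $w \in V^{\otimes 3}$ as its derivation-quotient algebra, and it is Calabi--Yau precisely when $w$ may be chosen honest, equivalently when its Nakayama automorphism is trivial; one checks that for $\mathcal{A}(E,\sigma)$ this Nakayama automorphism is determined by $(E,\sigma)$, so that being Calabi--Yau is equivalent to $\sigma$ lying in a distinguished subset $C_E \subseteq \Aut_k E$ (morally, ``$\sigma$ is a translation''). On the other hand, for $\theta \in \TSo(\mathcal{A}(E,\sigma))$ one computes the point scheme of the twist: $\mathcal{A}(E,\sigma)^{\theta}$ again has point variety $E$ (after the obvious graded change of coordinates) with structure automorphism $\Phi(\theta)\circ\sigma$, where $\Phi(\theta)=\overline{(\theta_1|_{A_1})^{\ast}} \in \Aut_k\mathbb{P}(A_1^{\ast})$; when $\theta=\{\phi^n\}$ is algebraic, $\Phi(\theta)$ runs exactly over the image of $\GrAut_k\mathcal{A}(E,\sigma)$ in $\Aut_k\mathbb{P}(A_1^{\ast})$, i.e.\ over the automorphisms of $\mathbb{P}^2$ that stabilise $E$ and commute with $\sigma$, whereas a non-algebraic $\theta$ corresponds, through a genuinely non-constant sequence $\{\tau_n\}$ as in Theorem \ref{thm.Mori}, to the weaker requirement of stabilising $E$ ``sequentially''. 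The task thus becomes to show that the twisting orbit of $\sigma$ always meets $C_E$.

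This last step is where the real work lies and is the main obstacle. When $E = \mathbb{P}^2$ one can take $\sigma' = \mathrm{id}$: every $\mathcal{A}(\mathbb{P}^2,\sigma)$ is an algebraic twist of the polynomial ring $k[x,y,z]$, which is Calabi--Yau. When $E$ is a smooth elliptic curve, $A$ is a $3$-dimensional Sklyanin algebra and is itself Calabi--Yau, so one takes $S=A$. The substance is the degenerate cubics --- the various reducible or non-reduced configurations, such as a triangle, three concurrent lines, a conic together with a line, a line with a double line, a nodal or a cuspidal cubic, and the triple line --- where one must pin down $C_E$ explicitly and check that some twist of $\sigma$ lands in it; for all of these but one type the connecting twisting system can be taken algebraic, $\theta=\{\phi^n\}$ with $\phi \in \GrAut_k\mathcal{A}(E,\sigma)$, while for the single exceptional type no graded algebra automorphism does the job and one must exhibit by hand a non-algebraic $\theta \in \TSo(\mathcal{A}(E,\sigma))$ --- equivalently, a non-constant sequence $\{\tau_n\}$ of automorphisms of $\mathbb{P}^2$ preserving $E$ that realises the square of Theorem \ref{thm.Mori} with $\sigma' \in C_E$. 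Granting all this, $S:=\mathcal{A}(E,\sigma')$ is a $3$-dimensional quadratic algebra which, being a twisted algebra of the AS-regular algebra $A$, is itself AS-regular, is Calabi--Yau since $\sigma' \in C_E$, and satisfies $\GrMod\,A \cong \GrMod\,S$.
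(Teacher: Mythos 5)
The paper does not prove this statement: it is quoted verbatim from \cite{IM2} (Theorem 4.4 there), so there is no in-paper argument to compare yours against. Judged on its own, your outline has two concrete defects besides the fact that the decisive case-by-case work is only promised, not performed. The first is the opening reduction. Not every $3$-dimensional quadratic AS-regular algebra is a geometric algebra $\mathcal{A}(E,\sigma)$ in the sense used here: the remark following Table 1 states that this happens exactly when the point scheme is reduced, and AS-regular algebras with non-reduced point scheme exist. Since the theorem is asserted for \emph{all} $3$-dimensional quadratic AS-regular algebras, an argument that begins by replacing $A$ with a pair $(E,\sigma)$, $E$ a reduced variety, silently discards precisely the algebras for which the geometric machinery of this paper is unavailable (for a non-reduced point scheme, the space of forms vanishing on the graph of an automorphism of the reduced variety is far larger than the actual relation space, so $A\neq\mathcal{A}(E_{\mathrm{red}},\sigma)$). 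Any complete proof has to handle these algebras by other means, e.g.\ through the classification of defining relations or of regular twisted superpotentials, which is how \cite{IM2} proceeds.

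The second defect is the smooth elliptic curve case, which you dismiss by asserting that $A$ is then a Sklyanin algebra and hence already Calabi--Yau. This is false: for $E$ smooth one has $\sigma=\sigma_p\tau_E^i$ with $i$ possibly nonzero, and only the translations $\sigma_p$ give the standard (Calabi--Yau, Sklyanin) algebras of Type EC in Table 1. Example \ref{exm.EC} exhibits $B=\mathcal{A}(E,\tau_E\sigma_p)$, which is AS-regular of Type EC but not standard; reducing such a $B$ to some $\mathcal{A}(E,\sigma_q)$ is itself a nontrivial input (\cite[Theorem 4.20]{IM}), not something you may take for free. Combined with the unexecuted analysis of the degenerate cubics --- including identifying the one type where no graded algebra automorphism works and a genuinely non-algebraic twisting system must be produced by hand --- your text is a reasonable roadmap for the known proof, but not a proof.
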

    
    \begin{lem}[{\cite[Lemma 2.8]{HMM}, \cite[Theorem 3.2]{IM}, \cite[Lemma 3.8]{MU}}]
    	\label{lem.SP}
    	Every $3$-dimensional geometric AS-regular algebra $A$ is graded Morita equivalent to
    	$S=k \langle x,y,z \rangle/(f_1,f_2,f_3)=\mathcal{A}(E,\sigma)$ in Table $1$.  
    \end{lem}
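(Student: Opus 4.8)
The plan is to reduce the statement to the classification of $3$-dimensional quadratic AS-regular algebras by their geometric data, and then to use Mori's geometric Morita-equivalence criterion (Theorem~\ref{thm.Mori}) to pick a normal form in each class. Since a geometric algebra in the sense of Definition~\ref{defin.geom} is quadratic by definition, a $3$-dimensional geometric AS-regular algebra $A$ is of the form $k\langle x,y,z\rangle/(f_1,f_2,f_3)$ by \cite[Theorem~1.5]{AS}, and by (G1) it determines a geometric pair $(E,\sigma)$ with $E\subset\mathbb{P}^2$ and $A=\mathcal{A}(E,\sigma)$. By the classification of the point schemes of such algebras, $E$ is either $\mathbb{P}^2$ itself or a cubic divisor in $\mathbb{P}^2$; in the latter case $E$ is a smooth (elliptic) curve, a nodal or cuspidal cubic, three lines (in general position or concurrent), the union of a conic and a line, or a non-reduced cubic (a line together with a double line, or a triple line).

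For each of these point schemes I would invoke Theorem~\ref{thm.Mori}: $\mathcal{A}(E,\sigma)$ is graded Morita equivalent to $\mathcal{A}(E',\sigma')$ whenever there is a sequence $\{\tau_n\}_{n\in\mathbb{Z}}$ of automorphisms of $\mathbb{P}^2$, each carrying $E$ isomorphically onto $E'$, with $\sigma'\circ\tau_n|_E=\tau_{n+1}|_E\circ\sigma$ for all $n$. Taking all $\tau_n$ equal to a single $\tau\in\PGl_3$ already lets me conjugate $\sigma$ and move the equation of $E$ to a standard form; when $E$ is reducible or non-reduced and $\sigma$ permutes the components of $E$ (or acts with extra freedom along a component), allowing the $\tau_n$ to vary provides the additional room needed to normalise $\sigma$ completely. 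In this way every geometric pair $(E,\sigma)$ arising from a $3$-dimensional geometric AS-regular algebra is carried to one of the normal pairs listed in Table~$1$.

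It then remains to record, for each such normal pair $(E,\sigma)$, the defining relations of $\mathcal{A}(E,\sigma)$: one computes $R=\{\,g\in V\otimes V\mid g(p,\sigma(p))=0\text{ for all }p\in E\,\}$, checks that it is spanned by the three homogeneous quadratics $f_1,f_2,f_3$ listed in Table~$1$, and verifies that the resulting $k\langle x,y,z\rangle/(f_1,f_2,f_3)$ is AS-regular and equals $\mathcal{A}(\mathcal{P}(A))$. Carrying this out case by case is exactly the content of \cite[Lemma~2.8]{HMM}, \cite[Theorem~3.2]{IM} and \cite[Lemma~3.8]{MU}, whose combination exhausts all the types above, so assembling these results yields the lemma.

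The step I expect to be the main obstacle is the normalisation of $\sigma$ for the degenerate (reducible or non-reduced) point schemes. There the relevant automorphism group is large, several sub-cases arise according to how $\sigma$ permutes and acts on the components of $E$, and in each sub-case one must check both that the chosen representative is compatible with AS-regularity of the algebra and that all of the freedom in the sequence $\{\tau_n\}$ has been used; the accompanying identification of each normal pair with an explicit triple $(f_1,f_2,f_3)$ via the computation of $R$ is routine but lengthy.
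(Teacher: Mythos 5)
Your outline matches what the paper does: the lemma is given there purely by citation to \cite[Lemma 2.8]{HMM}, \cite[Theorem 3.2]{IM} and \cite[Lemma 3.8]{MU}, and those references carry out exactly the program you describe (classify the possible point schemes, use Mori's criterion / projective equivalence to normalise the geometric pair $(E,\sigma)$, then compute $R$ and check AS-regularity case by case, with \cite{HMM} supplying the Calabi-Yau representatives that make the Table~1 forms ``standard'').

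One loose end: you include the non-reduced cubics (a line plus a double line, and a triple line) in your list of point schemes and then claim the case analysis ``exhausts all the types above,'' but Table~1 contains no normal form for those cases. They must be excluded at the outset rather than normalised: as the remark following Table~1 records, a $3$-dimensional quadratic AS-regular algebra is \emph{geometric} in the sense of Definition~\ref{defin.geom} precisely when its point scheme is reduced, so the non-reduced types simply do not occur under the hypothesis of the lemma. Without that observation your sketch would appear to require Table~1 to cover types it does not list. With it, the argument closes as you intend.
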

    
    \begin{center}
    	{\renewcommand\arraystretch{1.1}
    		{\small
    			\begin{tabular}{|c|p{4.8cm}|p{2.5cm}|p{5.5cm}|}
    				\multicolumn{4}{c}{Table $1$}
    				\\[5pt]
    				\hline
    				Type
    				& $f_{1}, f_{2}, f_{3}$
    				& $E$
    				& $\sigma$
    				\\ \hline\hline
    				$\rm{P}$ 
    				& 
    				$
    				\left\{
    				\begin{array}{ll}
    					yz-zy\\
    					zx-xz\\
    					xy-yx
    				\end{array}
    				\right.
    				$
    				& $\mathbb{P}^{2}$
    				&  \rule{0pt}{25pt}
    				$\sigma(a,b,c)=(a,b,c)$
    				\\[18pt] \hline
    				$\rm{S}$ 
    				& 
    				$\left\{
    				\begin{array}{ll}
    					yz-\alpha zy\\
    					zx-\alpha xz \quad\quad \alpha^{3}\neq 0,1\\
    					xy-\alpha yx
    				\end{array}
    				\right.
    				$
    				& $\begin{array}{ll}
    					& \mathcal{V}(x) \\
    					\cup & \mathcal{V}(y) \\
    					\cup & \mathcal{V}(z)
    				\end{array}$
    				& \rule{0pt}{25pt}
    				$\left\{
    				\begin{array}{ll}
    					\sigma(0,b,c)=(0,b,\alpha c)\\
    					\sigma(a,0,c)=(\alpha a,0,c)\\
    					\sigma(a,b,0)=(a,\alpha b,0)
    				\end{array}
    				\right.
    				$
    				\\[18pt] \hline
    				$\rm{S'}$ 
    				&
    				$\left\{
    				\begin{array}{ll}
    					yz-\alpha zy+x^{2}\\
    					zx-\alpha xz\quad\quad \alpha^{3}\neq 0,1\\
    					xy-\alpha yx
    				\end{array}
    				\right.
    				$
    				& $\begin{array}{ll}
    					& \mathcal{V}(x) \\
    					\cup & \mathcal{V}(x^2-\la yz) \\
    					& \la=\frac{\al^3-1}{\al}
    				\end{array}$
    				& \rule{0pt}{25pt}
    				$\left\{
    				\begin{array}{ll}
    					\sigma(0,b,c)=(0,b, \alpha c)\\
    					\sigma(a,b,c)=(a,\alpha b,\alpha^{-1} c)\\
    				\end{array}
    				\right.
    				$
    				\\[18pt] \hline
    				$\rm{T}$ 
    				& 
    				$\left\{
    				\begin{array}{ll}
    					yz-zy+x^{2}\\
    					zx-xz+y^{2}\\
    					xy-yx
    				\end{array}
    				\right.
    				$
    				& $\begin{array}{ll}
    					& \mathcal{V}(x+y) \\
    					\cup & \mathcal{V}(\ep x+y) \\
    					\cup & \mathcal{V}(\ep^2x+y)
    				\end{array}$
    			    
    			    $\ep^3=1$, $\ep,\ep^2 \neq 1$
    				& 
    				$\left\{
    				\begin{array}{ll}
    					\sigma(a,-a,c)=(a,- a, a+c)\\
    					\sigma(a,-\ep a,c)=(a,-\ep a, \ep^2a+c)\\
    					\sigma(a,-\ep^2 a,c)=(a,-\ep^2a, \ep a+c)
    				\end{array}
    				\right.
    				$
    				\\ \hline
    				$\rm{T'}$ 
    				& 
    				$\left\{
    				\begin{array}{ll}
    					yz-zy+xy+yx\\
    					zx-xz+x^{2}-yz-zy+y^{2}\\
    					xy-yx-y^{2}
    				\end{array}
    				\right.
    				$
    				& $\begin{array}{ll}
    					& \mathcal{V}(x) \\
    					\cup & \mathcal{V}(y^2-xz)
    				\end{array}$
    				& 
    				$\left\{
    				\begin{array}{ll}
    					\sigma(a,0,c)=(0,b,b+c), \\
    					\sigma(a,b,c)=(a,-a+b,a-2b+c) 
    				\end{array}
    				\right.
    				$
    				\\ \hline
    				$\rm{NC}$ 
    				& 
    				$\left\{
    				\begin{array}{ll}
    					yz-\alpha zy+x^{2}\\
    					zx-\alpha xz+y^{2}\quad\quad \alpha^{3}\neq 0,1\\
    					xy-\alpha yx
    				\end{array}
    				\right.
    				$
    				& $\begin{array}{ll}
    					\mathcal{V}(x^{3}+y^{3} \\
    					\qquad -\la xyz) \\
    					\la=\frac{\al^3-1}{\al}\end{array}$
    				& \rule{0pt}{25pt}
    				$
    				\begin{array}{ll}
    					\sigma(a,b,c)=\\
    					(a,\alpha b,
    					-\frac{a^{2}}{b}+\alpha^{2}c) \\
    				\end{array}
    				$
    				\\[18pt] \hline
    				$\rm{CC}$ 
    				& 
    				$\left\{
    				\begin{array}{ll}
    					yz-zy+y^{2}+3x^{2}\\
    					zx-xz+yx+xy-yz-zy\\
    					xy-yx-y^{2}
    				\end{array}
    				\right.
    				$
    				& $\mathcal{V}(x^{3}-y^{2}z)$
    				& $
    				\begin{array}{ll}
    					\sigma(a,b,c)=\\
    					(a-b,b,
    					-3\frac{a^2}{b}+3a-b+c) 
    				\end{array}
    				$
    				\\ \hline
    				EC 
    				& 
    				$\left\{
    				\begin{array}{ll}
    					\al yz+\be zy+\ga x^{2} \\
    					\al zx+\be xz+\ga y^{2} \\
    					\al xy+\be yx+\ga z^{2}
    				\end{array}
    				\right.
    				$
    				
    				$(\al^3+\be^3+\ga^3)^3\neq (3\al\be\ga)^3$, $\al\be\ga\neq 0$ 
    				& $\begin{array}{ll}
    					\mathcal{V}(x^{3}+y^{3}+z^{3} \\
    					\qquad -\lambda xyz), \\  
    					\lambda=\frac{\al^3+\be^3+\ga^3}{\al\be\ga}
    				\end{array}$
    				& 
    				\rule{0pt}{35pt}
    				$\sigma_p$ where $p=(\al,\be,\ga)\in E$
    				\\[30pt] \hline
    			\end{tabular}
    	}}
    \end{center}
    
\begin{rem}
	The original definition of geometric algebra given by Mori \cite{Mo} is different from our definition.
	In the sense of Definition \ref{defin.geom}, there exists a
	$3$-dimensional quadratic AS-regular algebra which is not a geometric algebra.
	Strictly speaking, a $3$-dimensional quadratic AS-regular algebra is a geometric algebra in our sense
	if and only if the \textquotedblleft point scheme\textquotedblright is reduced.
\end{rem}   

\section{Twisted algebras of geometric algebras}
In this section, we study twisted algebras of geometric algebras.
Let $E \subset \mathbb{P}(V^{\ast})$ be a projective variety
where $V$ is a finite dimensional $k$-vector space.
We use the following notations
introduced in \cite{CG}:

\begin{defin}
	Let $E \subset \mathbb{P}(V^{\ast})$ be a projective variety and $\sigma \in \Aut_{k}\,E$.
	We define
	\begin{align*}
		&\Aut_{k}(E \uparrow \mathbb{P}(V^{\ast})):=
		\{ \tau \in \Aut_{k}\,E \mid \tau=\overline{\tau}|_{E}
		\text{ for some } \overline{\tau} \in \Aut_{k}\,\mathbb{P}(V^{\ast}) \}, \\
		&\Aut_{k}(\mathbb{P}(V^{\ast}) \downarrow E):=
		\{ \tau  \in \Aut_{k}\,\mathbb{P}(V^{\ast}) \mid \tau|_{E} \in \Aut_{k}\,E  \}, \\
		&\Z:=\{ \tau \in \Aut_{k}(\mathbb{P}(V^{\ast}) \downarrow E) \mid \sigma\tau|_{E}\sigma^{-1}=\tau|_{E}\}, \\
		&\M:=\{ \tau \in \Aut_{k}(\mathbb{P}(V^{\ast}) \downarrow E) \mid (\tau|_{E}\sigma)^{i}\sigma^{-i} \in \Aut_{k}(E \uparrow \mathbb{P}(V^{\ast}))\,\,\, \forall i\in\mathbb{Z} \}, \\
		&\N:=\{ \tau \in \Aut_{k}(\mathbb{P}(V^{\ast}) \downarrow E) \mid \sigma\tau|_{E}\sigma^{-1} \in \Aut_{k}(E \uparrow \mathbb{P}(V^{\ast})) \}.
	\end{align*}
\end{defin}
Note that $ \Z \subset \M \subset \N \subset \Aut_{k}(\mathbb{P}(V^{\ast}) \downarrow E)$, and
$\Z$, $\N$ are subgroups of $\Aut_{k}(\mathbb{P}(V^{\ast}) \downarrow E)$.

\begin{lem}\label{lem.spps}
	Let $E \subset \mathbb{P}(V^{\ast})$ be a projective variety and $\sigma \in \Aut_{k}\,E$.
	If $\sigma\Aut_{k}(E \uparrow \mathbb{P}(V^{\ast}))=\Aut_{k}(E \uparrow \mathbb{P}(V^{\ast}))\sigma$,
	then $M(E,\sigma)=N(E,\sigma)=\Aut_{k}(\mathbb{P}(V^{\ast}) \downarrow E)$.
\end{lem}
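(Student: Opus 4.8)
The plan is to exploit the chain $\Z \subseteq \M \subseteq \N \subseteq \Aut_{k}(\mathbb{P}(V^{\ast}) \downarrow E)$ already noted above: it is enough to prove the reverse inclusion $\Aut_{k}(\mathbb{P}(V^{\ast}) \downarrow E) \subseteq \M$, for then all four sets coincide. Throughout I would write $G := \Aut_{k}(E \uparrow \mathbb{P}(V^{\ast}))$ and $D := \Aut_{k}(\mathbb{P}(V^{\ast}) \downarrow E)$, both viewed inside $\Aut_{k}\,E$ via restriction where appropriate.

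First I would record two elementary facts. (a) $G$ is a subgroup of $\Aut_{k}\,E$: if $\phi = \bar\phi|_{E}$ and $\psi = \bar\psi|_{E}$ with $\bar\phi,\bar\psi \in \Aut_{k}\,\mathbb{P}(V^{\ast})$, then $\bar\phi$ maps $E$ onto $\phi(E) = E$, so $\phi\psi = (\bar\phi\bar\psi)|_{E} \in G$ and $\phi^{-1} = (\bar\phi^{-1})|_{E} \in G$. (b) The hypothesis $\sigma G = G\sigma$ is equivalent, after multiplying on the right by $\sigma^{-1}$, to $\sigma G\sigma^{-1} = G$, and hence also to $\sigma^{-1}G\sigma = G$; that is, conjugation by $\sigma$ and by $\sigma^{-1}$ each carry $G$ into itself.

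Now I would take an arbitrary $\tau \in D$ and set $\phi := \tau|_{E}$. By definition of $D$ we have $\phi \in \Aut_{k}\,E$, and writing $\phi = \tau|_{E}$ exhibits $\phi$ as a restriction of an element of $\Aut_{k}\,\mathbb{P}(V^{\ast})$, so $\phi \in G$. To conclude $\tau \in \M$ I must show $(\phi\sigma)^{i}\sigma^{-i} \in G$ for every $i \in \mathbb{Z}$. The key computation is the telescoping identity
$$(\phi\sigma)^{i}\sigma^{-i} = \prod_{j=0}^{i-1}\sigma^{j}\phi\sigma^{-j} \qquad (i \geq 0),$$
which I would verify by an immediate induction on $i$ (conjugating the product from the inductive hypothesis by $\sigma$ shifts every exponent up by one). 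By fact (b) each factor $\sigma^{j}\phi\sigma^{-j}$ lies in $G$, and by fact (a) $G$ is closed under multiplication, so $(\phi\sigma)^{i}\sigma^{-i} \in G$ for $i \geq 0$. For $i < 0$ one argues symmetrically via the mirror identity $(\phi\sigma)^{i}\sigma^{-i} = \prod_{j=1}^{-i}\sigma^{-j}\phi^{-1}\sigma^{j}$ together with $\sigma^{-1}G\sigma = G$; alternatively, note that $\big((\phi\sigma)^{i}\sigma^{-i}\big)^{-1} = \sigma^{i}\big((\phi\sigma)^{-i}\sigma^{i}\big)\sigma^{-i}$ realizes the $i$-th element as a $\sigma$-power conjugate of the inverse of the $(-i)$-th element, which is already in $G$ since $-i > 0$. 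Either way $(\phi\sigma)^{i}\sigma^{-i} \in G$ for all $i$, hence $\tau \in \M$; this gives $D \subseteq \M$ and therefore $\M = \N = D$.

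I do not expect a deep obstacle here: once the definitions are unwound the statement is purely group-theoretic, the content being simply that if conjugation by $\sigma$ stabilizes $G$ then the cocycle-type products $(\phi\sigma)^{i}\sigma^{-i}$ never leave $G$. The only points that need genuine care are checking that $G$ is actually a subgroup of $\Aut_{k}\,E$ (so that finite products of $\sigma$-conjugates of $\phi^{\pm1}$ remain in $G$) and writing down the telescoping identity — together with its negative-exponent counterpart — correctly.
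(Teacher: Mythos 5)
Your proposal is correct and follows essentially the same route as the paper: the paper also reduces to showing $\Aut_{k}(\mathbb{P}(V^{\ast}) \downarrow E) \subset M(E,\sigma)$ and proves $(\tau|_E\sigma)^{i}\sigma^{-i} \in \Aut_{k}(E \uparrow \mathbb{P}(V^{\ast}))$ by a two-sided induction whose step is exactly your conjugate-and-multiply identity, so your telescoping product is just that induction written in closed form. The only difference is that you make explicit the two facts the paper uses silently, namely that $\Aut_{k}(E \uparrow \mathbb{P}(V^{\ast}))$ is a group and that the hypothesis is equivalent to stability under conjugation by $\sigma^{\pm 1}$.
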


\begin{proof}
	Since $M(E,\sigma) \subset N(E,\sigma) \subset \Aut_{k}(\mathbb{P}(V^{\ast}) \downarrow E)$ in general,
	it is enough to show that $\Aut_{k}(\mathbb{P}(V^{\ast}) \downarrow E) \subset M(E,\sigma)$.
	We will show that for any $\tau \in \Aut_{k}(\mathbb{P}(V^{\ast}) \downarrow E)$,
	$(\tau|_E\sigma)^{i}\sigma^{-i} \in
	\Aut_{k}(E \uparrow \mathbb{P}(V^{\ast}))$
	for every $i \in \mathbb{Z}$ by induction so that $\tau \in M(E,\sigma)$.
	The claim is trivial for $i=0$.
	If $(\tau|_E\sigma)^{i}\sigma^{-i} \in \Aut_{k}(E \uparrow \mathbb{P}(V^{\ast}))$ for some $i \ge 0$,
	then $(\tau|_E\sigma)^{i+1}\sigma^{-i-1}=
	\tau|_E\sigma((\tau|_E\sigma)^i\sigma^{-i})\sigma^{-1} \in \Aut_{k}(E \uparrow \mathbb{P}(V^{\ast}))$.
	If $(\tau|_E\sigma)^{-i}\sigma^i \in \Aut_{k}(E \uparrow \mathbb{P}(V^{\ast}))$ for some $i \ge 0$,
	then $$(\tau|_E\sigma)^{-(i+1)}\sigma^{i+1}=\sigma^{-1}\tau|_E^{-1}((\tau|_E\sigma)^{-i}\sigma^i)\sigma
	\in \Aut_{k}(E \uparrow \mathbb{P}(V^{\ast})).$$
\end{proof}

Let $A=\mathcal{A}(E,\sigma)$ be a geometric algebra.
The map $\Phi : \TSo(A) \to \Aut_{k}\,\mathbb{P}(A_1^{\ast})$ is defined by
$\Phi(\theta):=\overline{(\theta_{1}|_{A_{1}})^{\ast}}$.

\begin{lem}\label{Mtwist}
	Let $A=\mathcal{A}(E,\sigma)$ be a geometric algebra. 
	Then $$\Phi(\TSo(A))=M(E,\sigma).$$
\end{lem}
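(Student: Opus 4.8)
The plan is to prove the two inclusions $\Phi(\TSo(A)) \subseteq \M$ and $\M \subseteq \Phi(\TSo(A))$ separately, using Theorem \ref{thm.Mori} (Mori's geometric criterion) as the bridge between twisting systems and automorphisms of $\mathbb{P}(V^{\ast})$ compatible with $E$.

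For the inclusion $\Phi(\TSo(A)) \subseteq \M$, start with $\theta \in \TSo(A)$. Since $A^{\theta}$ is a twisted algebra, $\GrMod A \cong \GrMod A^{\theta}$, and one checks that $A^{\theta}$ is again a geometric algebra, say $A^{\theta} = \mathcal{A}(E, \sigma')$ (it has the same point variety $E$ since the degree-$1$ part and the quadratic relations transform in a controlled way; more precisely $\Phi(\theta) = \overline{(\theta_1|_{A_1})^{\ast}}$ is an automorphism of $\mathbb{P}(V^{\ast})$ preserving $E$, so $\Phi(\theta) \in \Aut_k(\mathbb{P}(V^{\ast}) \downarrow E)$). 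Then apply Theorem \ref{thm.Mori} to the pair $A = \mathcal{A}(E,\sigma)$ and $A^{\theta} = \mathcal{A}(E,\sigma')$: there is a sequence $\{\tau_n\}_{n \in \mathbb{Z}}$ of automorphisms of $\mathbb{P}(V^{\ast})$ restricting to isomorphisms $E \to E$ with $\sigma' \circ \tau_n|_E = \tau_{n+1}|_E \circ \sigma$. The key computation is to identify $\tau_n$ explicitly in terms of $\theta_n$: one expects $\tau_n = \overline{(\theta_n|_{A_1})^{\ast}}$ up to normalization, using the twisting-system axiom $\theta_n(a\theta_m(b)) = \theta_n(a)\theta_{n+m}(b)$ to get the commuting-square relations. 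From $\tau_n \in \Aut_k(\mathbb{P}(V^{\ast}))$ for all $n$, together with the recursion $\tau_{n+1}|_E = \sigma'\tau_n|_E\sigma^{-1}$ and $\tau_0 = \mathrm{id}$ (since $\theta_0 = \mathrm{id}_A$), one solves to get $\tau_n|_E = (\tau_1|_E \sigma)^n \sigma^{-n}$ where $\tau_1 = \Phi(\theta)$; hence $(\Phi(\theta)|_E \sigma)^i \sigma^{-i} = \tau_i|_E \in \Aut_k(E \uparrow \mathbb{P}(V^{\ast}))$ for all $i$, which is exactly the condition $\Phi(\theta) \in \M$.

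For the reverse inclusion $\M \subseteq \Phi(\TSo(A))$, start with $\tau \in \M$ and set $\tau_i := (\tau|_E\sigma)^i\sigma^{-i} \in \Aut_k(E \uparrow \mathbb{P}(V^{\ast}))$, lifting each to $\overline{\tau_i} \in \Aut_k(\mathbb{P}(V^{\ast}))$ with $\overline{\tau_0} = \mathrm{id}$; note $\overline{\tau_i}$ restricts to an isomorphism $E \to E$ and satisfies $\sigma \circ \tau_i|_E = \tau_{i+1}|_E \circ \sigma$ by construction (here I use $\sigma' = \sigma$, i.e.\ we are constructing a twisting system on $A$ itself rather than comparing to a different geometric algebra). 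By Theorem \ref{thm.Mori}, $\GrMod A \cong \GrMod \mathcal{A}(E,\sigma)$, but more importantly the data $\{\overline{\tau_i}\}$ should be promoted to an actual twisting system: define $\theta_n$ on $A_m$ via the dual of $\overline{\tau_n}^{\ast}$ on generators and extend multiplicatively, checking the axiom $\theta_n(a\theta_m(b)) = \theta_n(a)\theta_{n+m}(b)$ holds because the relations $R$ are preserved under each $\overline{\tau_i}$ (this is where (G2), i.e.\ $A = \mathcal{A}(\mathcal{P}(A))$, is essential — it guarantees $R$ is cut out by the geometry, so any automorphism fixing $E$ and intertwining $\sigma$ appropriately preserves $R$). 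Then $\theta \in \TSo(A)$ and $\Phi(\theta) = \overline{(\theta_1|_{A_1})^{\ast}} = \tau$ by construction.

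The main obstacle is the second inclusion: promoting the sequence of point-variety automorphisms $\{\tau_i\}$ to a genuine twisting system and verifying the cocycle-type identity. The subtlety is that a twisting system requires $\theta_n$ to be defined on all of $A$ (all degrees), not just $A_1$, and compatibility in higher degrees must be derived from the quadratic relations being preserved; this is precisely the content that makes geometric algebras tractable, and I expect the proof to lean on the fact (implicit in \cite{Mo}, \cite{CG}) that for a geometric algebra $A = \mathcal{A}(E,\sigma)$, graded $k$-linear maps respecting the relations correspond bijectively to compatible families of automorphisms of $\mathbb{P}(V^{\ast})$. A secondary technical point is checking that $\Phi(\theta) \in \Aut_k(\mathbb{P}(V^{\ast}) \downarrow E)$ in the first inclusion, i.e.\ that a twisted geometric algebra has the same point variety; this follows since twisting only rescales/reparametrizes the multiplication without changing which quadratic forms vanish on $E$, but it deserves an explicit argument.
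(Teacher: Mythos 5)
Your overall strategy is the same as the paper's: in both directions everything reduces to the dictionary (taken from Cooney--Grabowski, cited in the paper as \cite[Remark 15]{CG}) between twisting systems $\theta\in\TSo(A)$ on a geometric algebra and families $\{\tau_n\}$ in $\Aut_{k}(\mathbb{P}(V^{\ast})\downarrow E)$ with $\tau_0=\mathrm{id}$ satisfying a fixed commuting square, after which the recursion solves to $\tau_n|_E=(\tau_1|_E\sigma)^n\sigma^{-n}$, which is exactly membership in $M(E,\sigma)$. The detour through Theorem \ref{thm.Mori} in your first inclusion is inessential and slightly misleading: Mori's theorem only produces \emph{some} intertwining family for the pair $(A,A^{\theta})$, with no control on $\tau_0$ or $\tau_1$, so you cannot ``identify'' that family with $\overline{(\theta_n|_{A_1})^{\ast}}$ after the fact. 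You must, as you then indicate, derive the commuting squares for the specific family $\tau_n=\overline{(\theta_n|_{A_1})^{\ast}}$ directly from the axiom $\theta_n(a\theta_m(b))=\theta_n(a)\theta_{n+m}(b)$; that verification, together with the promotion of a compatible family back to a twisting system in the converse direction, is precisely what the paper outsources to \cite[Remark 15]{CG}.

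The one step that is actually wrong is the intertwining relation in your second inclusion. With $\tau_i:=(\tau|_E\sigma)^i\sigma^{-i}$ you assert that $\sigma\circ\tau_i|_E=\tau_{i+1}|_E\circ\sigma$ holds ``by construction,'' i.e.\ you take $\sigma'=\sigma$ in the square. That identity is equivalent to $\sigma(\tau|_E\sigma)^i=(\tau|_E\sigma)^{i+1}$, hence to $\tau|_E=\mathrm{id}$; already at $i=0$ it forces $\tau_1=\sigma\tau_0\sigma^{-1}=\mathrm{id}$, so it cannot hold for a general $\tau\in M(E,\sigma)$. The relation that does hold by construction is $(\tau|_E\sigma)\circ\tau_i|_E=\tau_{i+1}|_E\circ\sigma$: although the twisting system you are building lives on $A$, the twisted algebra it produces is $\mathcal{A}(E,\tau|_E\sigma)$ (this is \cite[Proposition 13]{CG}, used in Theorem \ref{thm.twist}), and it is $\tau_1|_E\sigma$, not $\sigma$, that must appear as the right-hand vertical map of the square encoding the degree-two instance of the twisting-system axiom. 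Since your proposed verification that $\{\theta_n\}$ satisfies the axiom in higher degrees is meant to rest on exactly this compatibility, the error would propagate if carried out literally. Once the relation is corrected, the remainder of your sketch --- lifting each $(\tau|_E\sigma)^i\sigma^{-i}$ to $\mathbb{P}(V^{\ast})$, dualizing to define $\theta_n$ on $V$, and using (G2) to see that the relations $R$ are preserved --- is the argument of the paper.
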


\begin{proof}
	Let $\theta \in \TSo(A)$. We set $V:=A_1=(A^{\theta})_1$.
	Then $\theta_n$ is also a graded $k$-linear isomorphism from $A^{\theta}$ to $A$ and
	satisfies $\theta_n(a \ast b)=\theta_n(a)\theta_{n+m}(b)$
	for every $n, m, l\in \mathbb{Z}$ and $a \in A^{\theta}_m$, $b \in A^{\theta}_l$.
	Let $\tau_n : \mathbb{P}(V^{\ast}) \to \mathbb{P}(V^{\ast})$ be automorphisms
	induced by the duals of $\theta_n|_V : V \to V$.
	By \cite[Remark 15]{CG},
	$\tau_n \in \Aut_{k}(\mathbb{P}(V^{\ast}) \downarrow E)$ and 
	the diagram of automorphisms
	$$\xymatrix{
		E\ar[r]^-{\tau_n|_E}\ar[d]_-{\sigma}&E\ar[d]^-{\tau_{1}|_E\sigma}\\
		E\ar[r]_-{\tau_{n+1}|_E}&E
	}$$
	commutes for every $n \in \mathbb{Z}$.
	Then $(\tau_{1}|_E\sigma)^{n}\sigma^{-n}=\tau_n|_E \in \Aut_{k}(E \uparrow \mathbb{P}(V^{\ast}))$
	for every $n \in \mathbb{Z}$, so
	it holds that $\Phi(\theta)=\tau_{1} \in \M$.
	
	Conversely, let $\tau \in M(E,\sigma)$.
	Since $(\tau|_E\sigma)^{n}\sigma^{-n} \in \Aut_{k}(E \uparrow \mathbb{P}(V^{\ast}))$,
	there is an automorphism $\tau_n \in \Aut_{k}\,\mathbb{P}(V^{\ast})$ such that
	$\tau_n|_E=(\tau|_E\sigma)^{n}\sigma^{-n}$ for every $n \in \mathbb{Z}$.
	By \cite[Remark 15]{CG}, there exists $\theta \in \TSo(A)$ such that
	$\overline{(\theta_{n}|_{A_{1}})^{\ast}}=\tau_n$ for every $n \in \mathbb{Z}$.
	Hence, it follows that
	$\Phi(\theta)=\overline{(\theta_{1}|_{A_{1}})^{\ast}}=\tau$.
\end{proof}

Let $A=T(V)/I$ be a connected graded algebra.
Let $\Psi: \GrAut_{k}\,A \to \PGl(V)$ be a group homomorphism defined by $\Psi(\phi)=\overline{\phi|_V}$.
We set $${\rm PGrAut}_{k}\,A:=\GrAut_{k}\,A/\Ker \Psi.$$

\begin{lem}\label{Ztwist}
	Let $A=\mathcal{A}(E,\sigma)$ be a geometric algebra.
	\begin{enumerate}[{\rm (1)}]
		\item $\Phi(\TSa(A))=Z(E,\sigma)$.
		\item ${\rm PGrAut}_{k}\,A \cong Z(E,\sigma)^{\rm op}.$
	\end{enumerate}
\end{lem}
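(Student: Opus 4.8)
The plan is to leverage Lemma \ref{Mtwist} together with the characterization of algebraic twisting systems recorded right after Lemma \ref{lem.tso}, namely that $\theta \in \TSa(A)$ if and only if $\theta_n = \theta_1^n$ for all $n$ and $\theta_1 \in \GrAut_k A$. For part (1), I would start from $\theta \in \TSa(A)$, apply Lemma \ref{Mtwist} to get $\tau := \Phi(\theta) = \tau_1 \in M(E,\sigma)$, where $\tau_n|_E = (\tau_1|_E\sigma)^n \sigma^{-n}$. The algebraicity $\theta_n = \theta_1^n$ forces, after dualizing and projectivizing, $\tau_n = \tau_1^n$ (note the order reverses under dualization, but projectively this still gives $\tau_n = (\tau_1)^n$ as automorphisms of $\mathbb{P}(V^\ast)$, since $(\phi^n)^\ast = (\phi^\ast)^n$). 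Combined with $\tau_1|_E = (\tau_1|_E\sigma)\sigma^{-1}$ one reads off $\tau_1^2|_E = (\tau_1|_E\sigma)^2\sigma^{-2}$, and an easy induction (or just the $n=2$ case) yields $\sigma\tau_1|_E\sigma^{-1} = \tau_1|_E$, i.e. $\tau_1 \in Z(E,\sigma)$. Conversely, given $\tau \in Z(E,\sigma)$, the relation $\sigma\tau|_E\sigma^{-1} = \tau|_E$ makes $(\tau|_E\sigma)^n\sigma^{-n} = (\tau|_E)^n$, so the associated sequence $\tau_n = \tau^n$ is genuinely the powers of a single automorphism; by \cite[Remark 15]{CG} this lifts to a $\theta \in \TSo(A)$ with $\theta_n|_{A_1}$ dual to $\tau^n$, and one checks $\theta_n = \theta_1^n$ (again up to the bookkeeping that \cite[Remark 15]{CG} produces the $\theta_n$ uniquely from the $\tau_n$, so uniqueness plus $\tau_n = \tau_1^n$ forces $\theta_n = \theta_1^n$), hence $\theta \in \TSa(A)$ and $\Phi(\theta) = \tau$.

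For part (2), I would construct a map $\GrAut_k A \to Z(E,\sigma)$ and show it is a surjective group anti-homomorphism with kernel exactly $\Ker\Psi$. Given $\phi \in \GrAut_k A$, the twisting system $\{\phi^n\}$ lies in $\TSa(A)$ after normalizing (or one works directly with $\phi|_{A_1}$), and by (1) its image $\overline{(\phi|_{A_1})^\ast}$ lies in $Z(E,\sigma)$; the assignment $\phi \mapsto \overline{(\phi|_{A_1})^\ast}$ is a group anti-homomorphism because dualization reverses composition, which is precisely why the target is $Z(E,\sigma)^{\rm op}$ rather than $Z(E,\sigma)$. Its kernel is the set of $\phi$ with $\overline{(\phi|_{A_1})^\ast} = \mathrm{id}$, equivalently $\overline{\phi|_{A_1}} = \mathrm{id}$ in $\PGl(A_1)$, which is exactly $\Ker\Psi$ by definition of $\Psi$. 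Surjectivity follows from the converse direction of (1): every $\tau \in Z(E,\sigma)$ arises as $\Phi(\theta)$ for an algebraic $\theta$, and algebraicity means $\theta_1 \in \GrAut_k A$ with $\Phi(\theta) = \overline{(\theta_1|_{A_1})^\ast} = \tau$, so $\theta_1$ is a preimage. Therefore $\Psi$ descends to an isomorphism ${\rm PGrAut}_k A = \GrAut_k A/\Ker\Psi \xrightarrow{\sim} Z(E,\sigma)^{\rm op}$.

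The main obstacle I anticipate is the careful bookkeeping of how dualization interacts with composition and with the two possible orderings in the diagram of Lemma \ref{Mtwist}: one must be precise that $(\theta_1|_{A_1})^\ast$ composed $n$ times equals $(\theta_1^n|_{A_1})^\ast$ as a \emph{covariant} power (so that $\tau_n = \tau_1^n$ with no reversal at the level of powers of a single map), while the composition of two \emph{different} graded automorphisms gets reversed (forcing the ${}^{\rm op}$). A secondary subtlety is invoking \cite[Remark 15]{CG} correctly: it gives a bijection between admissible sequences $\{\tau_n\}$ of automorphisms of $\mathbb{P}(V^\ast)$ restricting compatibly to $E$ and twisting systems in $\TSo(A)$, and I need that this bijection sends the sequence $\{\tau^n\}$ to an \emph{algebraic} twisting system — which should follow from uniqueness in that bijection, since $\{(\theta_1)^n\}$ for the lifted $\theta_1$ is a twisting system whose associated sequence is also $\{\tau^n\}$. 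Once these compatibilities are pinned down, both parts are short.
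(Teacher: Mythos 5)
Your proposal is correct and follows essentially the same route as the paper: part (1) forward uses the identity $(\tau_1|_E)^n=(\tau_1|_E\sigma)^n\sigma^{-n}$ from the proof of Lemma \ref{Mtwist} specialized to $n=2$, the converse uses \cite[Remark 15]{CG}, and part (2) packages the bijection $\GrAut_k A \leftrightarrow \TSa(A)$ together with $\Phi$ and $\Psi$ (the paper draws this as a commutative diagram, you spell it out as a surjective anti-homomorphism with kernel $\Ker\Psi$). The dualization bookkeeping you flag is handled exactly as you describe, so no changes are needed.
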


\begin{proof}
	(1) Let $\theta=\{ \theta_{1}^{n} \}_{n \in \mathbb{Z}} \in \TSa(A)$.
	We set $V:=A_1=(A^{\theta})_1$.
	Let $\tau_n : \mathbb{P}(V^{\ast}) \to \mathbb{P}(V^{\ast})$ be automorphisms
	induced by the duals of $\theta_{1}^n|_V : V \to V$.
	Then we can write $\tau_n=\tau_{1}^n$ for every $n \in \mathbb{Z}$.
	By the proof of Lemma \ref{Mtwist}, it follows that $(\tau_{1}|_E)^n=(\tau_1|_E\sigma)^n\sigma^{-n}$
	for every $n \in \mathbb{Z}$. If $n=2$, then $\tau_{1}|_E\sigma=\sigma\tau_{1}|_E$,
	so $\Phi(\theta)=\tau_{1} \in Z(E,\sigma)$.

	Conversely, let $\tau \in Z(E,\sigma)$.
	Since $\tau|_E\sigma=\sigma\tau|_E$, $(\tau|_E\sigma)^{n}\sigma^{-n}=(\tau|_E)^{n}$
	for every $n \in \mathbb{Z}$.
	By \cite[Remark 15]{CG}, there exists $\theta=\{ \theta_{1}^n \}_{n \in \mathbb{Z}} \in \TSa(A)$
	such that $\overline{(\theta_{1}|_{V})^{\ast}}^n=\tau^n$ for every $n \in \mathbb{Z}$.
	Hence, it follows that $\Phi(\theta)=\tau$.

	(2) By the following commutative diagram;
	$$\xymatrix{
	& \TSo(A) \ar@{}[d]|{\bigcup} \ar[rd]^-{\Phi} & \\
	\GrAut_{k}\,A \ar[r]^-{\text{ bij }} \ar[rd]_-{\Psi} & \TSa(A) \ar[r] & \Aut_{k} \mathbb{P}(V^{\ast})^{\rm op} \ar@{=}[d] \\
	& \PGl(V) \ar[r]^-{\cong} & \PGl(V^{\ast})^{\rm op} \\
    }
	$$
	it follows that ${\rm PGrAut}_{k}\,A \cong \Phi(\TSa(A))=Z(E,\sigma)^{\rm op}$.
\end{proof}

\begin{thm}\label{thm.twist}
	Let $A=\mathcal{A}(E,\sigma)$ be a geometric algebra.
	\begin{enumerate}[{\rm (1)}]
		\item ${\rm Twist}(A)=\{ \mathcal{A}(E,\tau|_E\sigma) \mid \tau \in \M \}/_{\cong}$.
		\item ${\rm Twist}_{\rm alg}(A)=\{ \mathcal{A}(E,\tau|_E\sigma) \mid \tau \in \Z \}/_{\cong}$.
	\end{enumerate}
\end{thm}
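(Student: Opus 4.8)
The plan is to reduce the statement to three facts already in hand: Lemma~\ref{lem.tso}, which lets us restrict to $\theta\in\TSo(A)$; Lemma~\ref{Mtwist}, which says $\Phi(\TSo(A))=\M$; and Lemma~\ref{Ztwist}(1), which says $\Phi(\TSa(A))=\Z$. The only new ingredient needed is the identification of the twisted algebra itself as a geometric algebra:
\[
\text{for every }\theta\in\TSo(A),\qquad A^{\theta}=\mathcal{A}\bigl(E,\ \Phi(\theta)|_{E}\,\sigma\bigr).
\]
Granting this, part~(1) is a formal assembly: by Lemma~\ref{lem.tso}, ${\rm Twist}(A)=\{A^{\theta}\mid\theta\in\TSo(A)\}/_{\cong}$; the displayed identity rewrites the right-hand side as $\{\mathcal{A}(E,\Phi(\theta)|_{E}\sigma)\mid\theta\in\TSo(A)\}/_{\cong}$; and by Lemma~\ref{Mtwist} this equals $\{\mathcal{A}(E,\tau|_{E}\sigma)\mid\tau\in\M\}/_{\cong}$. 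For part~(2), recall from the preceding discussion that ${\rm Twist}_{\rm alg}(A)=\{A^{\phi}\mid\phi\in\GrAut_{k}A\}/_{\cong}$ and that $A^{\phi}=A^{\theta}$ for the algebraic twisting system $\theta=\{\phi^{n}\}_{n\in\mathbb{Z}}\in\TSa(A)$; running the same argument with Lemma~\ref{Ztwist}(1) in place of Lemma~\ref{Mtwist} gives ${\rm Twist}_{\rm alg}(A)=\{\mathcal{A}(E,\tau|_{E}\sigma)\mid\tau\in\Z\}/_{\cong}$.

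So the work is in the displayed identity. Write $A=T(V)/(R)$ with $V=A_{1}$ and $R\subset V\otimes V$. Since $A$ is generated in degree $1$ and $\theta_{1}$ is bijective, $A^{\theta}$ is also generated in degree $1$, and one checks, using that $(A^{\theta})_{2}=A_{2}$ is presented via $V\otimes V\xrightarrow{\,\mathrm{id}_{V}\otimes\theta_{1}|_{V}\,}V\otimes V\twoheadrightarrow A_{2}$, that $A^{\theta}=T(V)/(R')$ with $R'=(\mathrm{id}_{V}\otimes\theta_{1}|_{V})^{-1}(R)$; here I use the standard fact that a twist of a quadratic algebra is quadratic, so that Definition~\ref{defin.geom} applies to $A^{\theta}$. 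Next I would translate $R'$ into zero-locus form. For $g=\sum_{i}a_{i}\otimes b_{i}\in V\otimes V$ one has $(\mathrm{id}\otimes\theta_{1})(g)(p,q)=\sum_{i}\langle a_{i},p\rangle\langle\theta_{1}b_{i},q\rangle=g\bigl(p,\theta_{1}^{\ast}q\bigr)$; since $\mathrm{id}\otimes\theta_{1}$ is a bijection of $V\otimes V$, the substitution $h=(\mathrm{id}\otimes\theta_{1})(g)$ together with (G1) for $A$ yields $\mathcal{V}(R')=\{(p,\theta_{1}^{\ast}\sigma p)\mid p\in E\}$. Because $\overline{\theta_{1}^{\ast}}=\Phi(\theta)=:\tau$ lies in $\M\subset\Aut_{k}(\mathbb{P}(V^{\ast})\downarrow E)$, it carries $E$ onto $E$, so $\theta_{1}^{\ast}\sigma p$ represents the point $(\tau|_{E}\sigma)(p)\in E$; hence $\mathcal{V}(R')=\{(p,(\tau|_{E}\sigma)(p))\mid p\in E\}$, which is (G1) for $A^{\theta}$ with $\mathcal{P}(A^{\theta})=(E,\tau|_{E}\sigma)$. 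Finally, feeding the identity $g(p,\theta_{1}^{\ast}\sigma p)=(\mathrm{id}\otimes\theta_{1})(g)(p,\sigma p)$ back through (G2) for $A$ (namely $R=\{h\mid h(p,\sigma p)=0\ \forall p\in E\}$) gives $R'=\{g\mid g(p,(\tau|_{E}\sigma)(p))=0\ \forall p\in E\}$, i.e.\ (G2) for $A^{\theta}$. Thus $A^{\theta}=\mathcal{A}(E,\tau|_{E}\sigma)$, as claimed.

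The main obstacle I anticipate is bookkeeping rather than conceptual: one must keep the dualization and projectivization in $\Phi(\theta)=\overline{(\theta_{1}|_{V})^{\ast}}$ consistent throughout, so that the point scheme of $A^{\theta}$ comes out as $(E,\tau|_{E}\sigma)$ and not, say, $(E,\sigma\,\tau|_{E})$ or with an inverse; and one should pin down cleanly a reference for the statement that a twist of a quadratic algebra is quadratic, so that calling $A^{\theta}$ a geometric algebra is legitimate. Alternatively, the displayed identity can be extracted directly from the correspondence of \cite[Remark~15]{CG} already invoked in the proof of Lemma~\ref{Mtwist}, which exhibits $A^{\theta}$ with point variety $E$ and automorphism $\tau_{1}|_{E}\sigma=\Phi(\theta)|_{E}\sigma$; in that case the proof collapses to the formal assembly of the first paragraph.
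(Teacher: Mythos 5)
Your proof is correct and follows the same overall strategy as the paper: reduce to $\TSo(A)$ via Lemma~\ref{lem.tso}, use the identity $A^{\theta}\cong\mathcal{A}(E,\Phi(\theta)|_{E}\sigma)$, and invoke Lemmas~\ref{Mtwist} and~\ref{Ztwist}. The only difference is that the paper simply cites \cite[Proposition 13]{CG} for that identity, whereas you prove it directly from the presentation $A^{\theta}=T(V)/\bigl((\mathrm{id}_{V}\otimes\theta_{1}|_{V})^{-1}(R)\bigr)$; your computation (including the duality conventions and the resulting order $\tau|_{E}\sigma$ rather than $\sigma\tau|_{E}$) checks out, modulo the reference you already flag for the standard fact that a twist of a quadratic algebra is quadratic.
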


\begin{proof}
	By \cite[Proposition 13]{CG}, for every $\theta \in \TSo(A)$,
	$A^{\theta} \cong \mathcal{A}(E,\Phi(\theta)|_E\sigma)$.
	By Lemma \ref{Mtwist}, it follows that
	\begin{align*}
		{\rm Twist}(A)&:=\{ A^{\theta} \mid \theta \in \TSo(A) \}/_{\cong} \\
		&=\{ \mathcal{A}(E,\Phi(\theta)|_E\sigma) \mid \theta \in \TSo(A) \}/_{\cong} \\
		&=\{ \mathcal{A}(E,\tau|_E\sigma) \mid \tau \in \Phi(\TSo(A)) \}/_{\cong} \\
		&=\{ \mathcal{A}(E,\tau|_E\sigma) \mid \tau \in M(E,\sigma) \}/_{\cong}.
	\end{align*}
	By Lemma \ref{Ztwist}, we can similarly show that
	$${\rm Twist}_{\rm alg}(A)=
	\{ \mathcal{A}(E,\tau|_E\sigma) \mid \tau \in Z(E,\sigma) \}/_{\cong}.$$
\end{proof}

\section{Twisted algebras of $3$-dimensional geometric AS-regular algebras}
In this section, we classify twisted algebras of $3$-dimensional geometric AS-regular algebras.
We recall that for connected graded algebras $A$ and $A'$ generated in degree 1,
$\GrMod\,A \cong \GrMod\,A'$ if and only if $A' \in {\rm Twist}(A)$, so
$$
{\rm Twist}(A)=\{ A' \mid \GrMod\,A' \cong \GrMod\,A \}/_{\cong}.
$$
By Lemma \ref{lem.SP}, we may assume that $A$ is a $3$-dimensional geometric Calabi-Yau AS-regular algebra
in Table $1$
to compute ${\rm Twist}(A)$.
The algebras in Table 1 are called {\it standard} in this paper.
For $3$-dimensional standard AS-regular algebras,
we will compute the subsets
$\Z$ and $\M$ of $\Aut_{k}(\mathbb{P}^2 \downarrow E)$.
We remark that some of the computations were given in \cite[Section 4]{CG}.

For a $3$-dimensional geometric AS-regular algebra $\mathcal{A}(E,\sigma)$,
the map $$\ddPAut \to \uuPAut; \tau \mapsto \tau|_E$$ is a bijection, so
we identify $\tau \in \Aut_{k}(\mathbb{P}^2 \downarrow E)$ with $\tau|_E \in \Aut_{k}(E \uparrow \mathbb{P}^2)$
if there is no potential confusion.

Let $E$ be an elliptic curve in $\mathbb{P}^2$. We use a {\it Hesse form}
$$E=\mathcal{V}(x^3+y^3+z^3-3 \la xyz)$$ where $\la \in k$ with $\la^3 \neq 1$.
It is known that every elliptic curve in $\mathbb{P}^2$ can be written in this form (see \cite[Corollary 2.18]{F}).
The $j$-invariant of a Hesse form $E$ is given by
$j(E)=\frac{27\la^3(\la^3+8)^3}{(\la^3-1)}$
(see \cite[Proposition 2.16]{F}).
The $j$-invariant $j(E)$ classifies elliptic curves in $\mathbb{P}^2$ up to projective equivalence
(see \cite[Theorem IV 4.1 (b)]{H}).
We fix the group structure on $E$ with the zero element $o:=(1,-1,0) \in E$ (see \cite[Theorem 2.11]{F}).
For a point $p \in E$, a {\it translation} by $p$, denoted by $\sigma_{p}$, is an automorphism of $E$
defined by $\sigma_{p}(q)=p+q$ for every $q \in E$.
We define $\Aut_{k}(E,o):=\{ \sigma \in \Aut_{k}\,E \mid \sigma(o)=o \}$.
It is known that $\Aut_{k}(E,o)$ is a finite cyclic subgroup of $\Aut_{k}\,E$ (see \cite[Corollary IV 4.7]{H}).

\begin{lem}[{\cite[Theorem 4.6]{IM}}]\label{lem.tau}
	A generator of $\Aut_{k}(E,o)$ is given by
	\begin{enumerate}
		\item $\tau_E(a,b,c):=(b,a,c)$ if $j(E) \neq 0,12^3$,
		\item $\tau_E(a,b,c):=(b,a,\ep c)$ if $\la=0$ (so that $j(E)=0$),
		\item $\tau_E(a,b,c):=(\ep^2a+\ep b+c,\ep a+\ep^2b+c,a+b+c)$ if $\la=1+\sqrt{3}$ (so that $j(E)=12^3$)
	\end{enumerate}
    where $\ep$ is a primitive $3$rd root of unity.
    In particular, $\Aut_{k}(E,o)$ is a subgroup of $\uuPAut=\ddPAut$.
\end{lem}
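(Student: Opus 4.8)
The statement to prove is Lemma~\ref{lem.tau}, which asserts that $\Aut_k(E,o)$ is cyclic with a generator given explicitly in each of the three cases, and that this generator extends to an automorphism of $\mathbb{P}^2$.

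\medskip

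The plan is to start from the classical structure theory of automorphisms of an elliptic curve fixing the origin. First I would recall that $\Aut_k(E,o)$ is a finite cyclic group whose order is $2$ when $j(E)\neq 0,12^3$, order $6$ when $j(E)=0$, and order $4$ when $j(E)=12^3$ (this is \cite[Corollary IV 4.7]{H}, cited just above the statement). So it suffices to exhibit, in each case, an explicit projective automorphism of $\mathbb{P}^2$ that preserves $E$, fixes $o=(1,-1,0)$, and has the correct order; since the group is cyclic of that order, such an element is automatically a generator.

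\medskip

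For each case I would carry out the verification in three routine steps. \emph{Step 1: the map preserves $E$.} Substitute the coordinate transformation into the Hesse cubic $x^3+y^3+z^3-3\lambda xyz$ and check it is sent to a scalar multiple of itself. In case (1) the swap $(a,b,c)\mapsto(b,a,c)$ visibly fixes the polynomial for every $\lambda$; in case (2) with $\lambda=0$ the polynomial is $x^3+y^3+z^3$ and $(a,b,c)\mapsto(b,a,\ep c)$ multiplies it by $\ep^3=1$; in case (3) with $\lambda=1+\sqrt3$ one checks the given linear map (whose matrix is essentially the $3\times3$ Fourier/DFT matrix up to the right normalization) carries the Hesse cubic to a multiple of itself — this uses the identity $(\ep^3=1)$ and the specific value of $\lambda$, and is the one genuinely computational point. \emph{Step 2: the map fixes $o=(1,-1,0)$.} Immediate in cases (1) and (2); in case (3), plug in $(1,-1,0)$ and check the image is proportional to $(1,-1,0)$ using $1+\ep+\ep^2=0$. \emph{Step 3: the map has the right order.} The transposition in (1) has order $2$; the map in (2) has order $6$ (its cube is $(a,b,c)\mapsto(a,b,c)$? no — the cube of $(a,b,c)\mapsto(b,a,\ep c)$ is $(a,b,c)\mapsto(b,a,\ep^3 c)=(b,a,c)$, which has order $2$, so the map has order $6$); the map in (3) is checked to have order $4$ by squaring its matrix and comparing with a scalar. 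Then, since $\Aut_k(E,o)$ is cyclic of precisely that order, each exhibited element generates it, and by construction each is the restriction of an element of $\Aut_k\mathbb{P}^2$, giving the final ``In particular'' clause and the containment $\Aut_k(E,o)\subset\uuPAut=\ddPAut$.

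\medskip

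The main obstacle I anticipate is purely the bookkeeping in case (3): one must pin down the correct group law so that $o=(1,-1,0)$ really is the identity (this is the choice fixed before the statement via \cite[Theorem 2.11]{F}), verify that the special value $\lambda=1+\sqrt3$ is exactly the one making the $j$-invariant $12^3$ (using the formula $j(E)=\tfrac{27\lambda^3(\lambda^3+8)^3}{\lambda^3-1}$ recorded above), and then confirm the Fourier-type matrix both normalizes the Hesse pencil and has order $4$ rather than $2$. Everything else is a short, direct substitution. One could alternatively avoid some of this by citing the known classification of the Hesse pencil's automorphism group (the Hessian group) and simply identifying the stabilizer of a point of order equal to $|\Aut_k(E,o)|$, but the hands-on verification above is self-contained and is likely what the paper intends.
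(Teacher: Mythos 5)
Your proposal is correct, and there is nothing in the paper to compare it against in detail: Lemma~\ref{lem.tau} is quoted verbatim from \cite[Theorem 4.6]{IM} and the paper supplies no proof of its own, so a self-contained verification of the kind you outline is exactly what is called for. Your strategy --- use \cite[Corollary IV 4.7]{H} to know $\Aut_k(E,o)$ is cyclic of order $2$, $6$, or $4$ according to $j(E)$, then exhibit in each case a projective linear map preserving the Hesse cubic, fixing $o=(1,-1,0)$, and having the right order --- is sound, and the one ``genuinely computational point'' you flag does work out: writing $u=\ep^2a+\ep b+c$, $v=\ep a+\ep^2b+c$, $w=a+b+c$, the factorization $u^3+v^3+w^3-3uvw=(u+v+w)(u+\ep v+\ep^2 w)(u+\ep^2 v+\ep w)=27abc$ together with $uvw=a^3+b^3+c^3-3abc$ gives $F_\lambda(\tau_E(a,b,c))=3(1-\lambda)(a^3+b^3+c^3)+(18+9\lambda)abc$, which is proportional to $F_\lambda$ precisely when $\lambda^2-2\lambda-2=0$, i.e.\ $\lambda=1\pm\sqrt3$; and the square of the matrix in case (3) is $3$ times the $x\leftrightarrow y$ swap, so it has order $4$ in $\PGl_3(k)$. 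Two small points you should make explicit rather than leave implicit: first, the orders you compute are orders in $\PGl_3(k)$, whereas what is needed is the order of the restriction to $E$; these agree because a smooth plane cubic contains four points in general position, so a projective transformation restricting to the identity on $E$ is the identity (this also gives the injectivity behind $\uuPAut=\ddPAut$). Second, in case (1) you should note that the swap is not the identity on $E$ (equivalently $E\not\subset\mathcal V(x-y)$), so its restriction really has order $2$ and hence generates.
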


\begin{rem}
	When $j(E)=0, 12^3$, we may fix $\la=0,1+\sqrt{3}$ respectively as in Lemma \ref{lem.tau},
	because if two elliptic curves $E$ and $E'$ in $\mathbb{P}^2$ are projectively equivalent,
	then for every $\mathcal{A}(E,\sigma)$, there exists an automorphism $\sigma' \in \Aut_{k}\,E'$
	such that $\mathcal{A}(E,\sigma) \cong \mathcal{A}(E',\sigma')$ (see \cite[Lemma 2.6]{MU}).
\end{rem}

It follows from \cite[Proposition 4.5]{IM} that every automorphism $\sigma \in \Aut_{k}\,E$
can be written as $\sigma=\sigma_{p}\tau_E^i$ where $\sigma_{p}$ is a translation by a point $p \in E$,
$\tau_E$ is a generator of $\Aut_{k}(E,o)$ and $i \in \mathbb{Z}_{|\tau_E|}$.
For any $n \ge 1$, we call a point $p \in E$ {\it $n$-torsion}
if $np=o$.
We set $E[n]:=\{ p \in E \mid np=o \}$ and $T[n]:=\{ \sigma_{p} \in \Aut_{k}\,E \mid p \in E[n] \}$.

If $A=\mathcal{A}(E,\sigma)$ is a $3$-dimensional standard AS-regular algebra,
we write $$\ddPAut=Z(E) \rtimes G(E)$$ as in Table 2 below
where $Z(E) \leq \ddPAut$ with $Z(E) \subset Z(E,\sigma)$ and
$G(E) \leq \ddPAut$ so that
$$N(E,\sigma)=Z(E) \rtimes (G(E) \cap N(E,\sigma)).$$
\begin{center}
	{\renewcommand\arraystretch{1.1}
		{\small
			\begin{tabular}{|c|c|c|}
				\multicolumn{3}{c}{{\rm Table 2}}
				\\[5pt]
				\hline
				{\rm Type}
				& $Z(E)$
				& $G(E)$
				\\ \hline \hline
				{\rm P}
				& $\PGl_{3}(k)$
				& $\{ \rm id \}$
				\\ \hline
				{\rm S}
				& $\left\{
				\begin{pmatrix}
					1 & 0 & 0 \\
					0 & e & 0 \\
					0 & 0 & i
				\end{pmatrix} \middle| ei \neq 0
				\right\}
				\rtimes \left\langle
				\begin{pmatrix}
					0 & 1 & 0 \\
					0 & 0 & 1 \\
					1 & 0 & 0
				\end{pmatrix} \right\rangle$
			    &
				$\left\langle
				\begin{pmatrix}
					0 & 1 & 0 \\
					1 & 0 & 0 \\
					0 & 0 & 1
				\end{pmatrix}
				\right\rangle$
				\\ \hline
				{\rm S'}
				& $\left\{
				\begin{pmatrix}
					1 & 0 & 0 \\
					0 & e & 0 \\
					0 & 0 & e^{-1}
				\end{pmatrix} \middle|\, e \neq 0
				\right\}$
				&
				$\left\langle
				\begin{pmatrix}
					1 & 0 & 0 \\
					0 & 0 & 1 \\
					0 & 1 & 0
				\end{pmatrix} \right\rangle$
				\\ \hline
				{\rm T}
				& $\left\{
				\begin{pmatrix}
					1 & 0 & 0 \\
					0 & e & 0 \\
					g & h & e^2
				\end{pmatrix} \middle| e^3=1 \right\} \rtimes \left\langle
			    \begin{pmatrix}
			    	0 & 1 & 0 \\
			    	1 & 0 & 0 \\
			    	0 & 0 & -1
			    \end{pmatrix} \right\rangle$
				& $\left\{
				\begin{pmatrix}
					1 & 0 & 0 \\
					0 & 1 & 0 \\
					0 & 0 & i
				\end{pmatrix} \middle| i \neq 0 \right\}$
				\\ \hline
				{\rm T'}
				& $
				\left\{
				\begin{pmatrix}
					1 & 0 & 0 \\
					d & 1 & 0 \\
					d^2 & 2d & 1
				\end{pmatrix}
				\right\}$
				& $\left\{
				\begin{pmatrix}
					1 & 0 & 0 \\
					0 & e & 0 \\
					0 & 0 & e^2
				\end{pmatrix} \middle| e \neq 0
				\right\}$
				\\ \hline
				{\rm NC}
				& $\left\langle
				\begin{pmatrix}
					1 & 0 & 0 \\
					0 & \ep & 0 \\
					0 & 0 & \ep^2
				\end{pmatrix}\right\rangle$
			    & $\left\langle
				\begin{pmatrix}
					0 & 1 & 0 \\
					1 & 0 & 0 \\
					0 & 0 & 1 
				\end{pmatrix}\right\rangle$
				\\ \hline
				{\rm CC}
				& $\{ \rm id \}$
				& $\left\{
				\begin{pmatrix}
					1 & 0 & 0 \\
					0 & e & 0 \\
					0 & 0 & e^2
				\end{pmatrix} \middle| e \neq 0
				\right\}$
				\\ \hline
				{\rm EC}
				& $T[3]$
				& $\Aut_{k}(E,o)$
				\\ \hline
			\end{tabular}
	}}
\end{center}

	The Table 2 above can be checked by the following three steps:
	\begin{itemize}
		\item Step $1$: Calculate $\ddPAut$.
		\item Step $2$: Find $Z(E) \leq \ddPAut$ with $Z(E) \subset Z(E,\sigma) \cong ({\rm PGrAut}_{k}\,A)^{\rm op}$
		(see Lemma \ref{Ztwist} (2)).
		\item Step $3$: Find $G(E) \leq \ddPAut$.
	\end{itemize}
	$\ddPAut$ and ${\rm PGrAut}_{k}\,A$ were computed in \cite{Y}.
	We explain these steps for Type S.
	By Lemma \ref{lem.SP},
	$E=\mathcal{V}(x) \cup \mathcal{V}(y) \cup \mathcal{V}(z)$ and
	$$\begin{cases}
		\sigma(0,b,c)=(0,b,\al c) \\
		\sigma(a,0,c)=(\al a,0,c) \\
		\sigma(a,b,0)=(a,\al b,0)
	\end{cases}$$
    where $\al^3 \neq 0,1$.
	By \cite[Lemma 3.2.1]{Y},
	$$\ddPAut=\left\{
	\begin{pmatrix}
	1 & 0 & 0 \\
    0 & e & 0 \\
	0 & 0 & i
	\end{pmatrix} \middle| ei \neq 0
	\right\}
	\rtimes \left\langle
	\begin{pmatrix}
	0 & 1 & 0 \\
	0 & 0 & 1 \\
	1 & 0 & 0
	\end{pmatrix},
	\begin{pmatrix}
	0 & 1 & 0 \\
	1 & 0 & 0 \\
	0 & 0 & 1
	\end{pmatrix}
	\right\rangle.$$
	By \cite[Theorem 3.3.1]{Y}
	\begin{align*}
	Z(E,\sigma)&=({\rm PGrAut}_{k}\,A)^{\rm op} \\
	&=
	\begin{cases}
		\left\{
		\begin{pmatrix}
			1 & 0 & 0 \\
			0 & e & 0 \\
			0 & 0 & i
		\end{pmatrix} \middle| ei \neq 0
		\right\} \rtimes \left\langle
		\begin{pmatrix}
			0 & 1 & 0 \\
			0 & 0 & 1 \\
			1 & 0 & 0
		\end{pmatrix}
		\right\rangle &\text{if } \sigma^2 \neq {\rm id}, \\
		\ddPAut &\text{if } \sigma^2={\rm id},
	\end{cases}
    \end{align*}
	so we may take
	$Z(E)=\left\{
	\begin{pmatrix}
		1 & 0 & 0 \\
		0 & e & 0 \\
		0 & 0 & i
	\end{pmatrix} \middle| ei \neq 0
	\right\} \rtimes \left\langle
	\begin{pmatrix}
		0 & 1 & 0 \\
		0 & 0 & 1 \\
		1 & 0 & 0
	\end{pmatrix}
	\right\rangle$
	and $G(E)=\left\langle
	\begin{pmatrix}
		0 & 1 & 0 \\
		1 & 0 & 0 \\
		0 & 0 & 1
	\end{pmatrix}
	\right\rangle$.
	
		\begin{rem}
		By Table 2,
		\begin{enumerate}[{\rm (1)}]
			\item $|G(E)|<\infty$ if and only if $A$ is of Type P, S, S', NC, EC, and, in this case,
			there exists $\tau_E \in \ddPAut$ such that $G(E)=\langle \tau_E \rangle$ is a finite cyclic group.
			
			\item $|G(E)|<\infty$ but $|G(E)| \neq 2$ if and only if $A$ is of Type P
			($|G(E)|=1$), or Type EC with $j(E)=0$ ($|G(E)=6|$), or Type EC with $j(E)=12^3$ ($|G(E)|=4$).
		\end{enumerate}
	\end{rem}

\begin{thm}\label{thm.ctt}
	If $A=\mathcal{A}(E,\sigma)$ is a $3$-dimensional quadratic AS-regular algebra of Type T, T', CC
	(so that $|\sigma|=\infty$ (cf. \cite{IMo})), then $Z(E,\sigma)=M(E,\sigma)=N(E,\sigma)$.
\end{thm}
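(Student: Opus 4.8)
The plan is to prove the reverse inclusion $\N\subseteq\Z$, which together with the general chain $\Z\subseteq\M\subseteq\N$ gives the asserted equalities. By the description accompanying Table~2 we have $\ddPAut=Z(E)\rtimes G(E)$ and $\N=Z(E)\rtimes\bigl(G(E)\cap\N\bigr)$, with $Z(E)\subseteq\Z$; since $\Z$ is a subgroup of $\ddPAut$, the whole statement reduces to the single claim $G(E)\cap\N=\{{\rm id}\}$. Indeed, granting this, $\N=Z(E)\rtimes\{{\rm id}\}=Z(E)\subseteq\Z\subseteq\M\subseteq\N$, so $Z(E)=\Z=\M=\N$.

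To prove $G(E)\cap\N=\{{\rm id}\}$ I would treat Types T, T', CC one at a time, using in each case the explicit $\sigma$ from Table~1 and the explicit one--parameter group $G(E)\cong k^{\times}$ from Table~2. The hypothesis $|\sigma|=\infty$ enters because, for these three types, $\sigma$ is \emph{not} the restriction of a linear automorphism of $\mathbb{P}^2$: it fixes the distinguished point of $E$ (the common point of the three lines for T, the point $\mathcal{V}(x)\cap\mathcal{V}(y^2-xz)$ for T', the cusp for CC) and, on a rational parametrisation of each irreducible component of $E$, acts as a translation, so $\sigma^{-1}$ is completely explicit on each component. For $\tau\in G(E)$ one then computes the automorphism $\tau':=\sigma\,\tau|_E\,\sigma^{-1}$ of $E$ directly in these coordinates: since $\tau$ acts on each component by a scaling, $\tau'$ acts on each component by an affine map whose translation part is nonzero unless $\tau={\rm id}$.

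The remaining, and main, task is to show that such a $\tau'$ lies in $\uuPAut$ only when $\tau={\rm id}$. For Type CC this is quick: an element of $\uuPAut$ fixes the cusp and the cuspidal tangent line, and preservation of the cubic then forces its matrix to be diagonal, so it acts on $E^{\rm sm}\cong\mathbb{A}^1$ by a scaling with no translation part. For Types T and T' the restriction of $\uuPAut$ to a single component already contains affine maps with nonzero translation part, so one argues by comparing all components at once: one writes down the general element of $\uuPAut$ --- for T, an automorphism of $\mathbb{P}^2$ preserving the three lines $\mathcal{V}(\ep^{j}x+y)$ and their common point is forced, since a nonzero polynomial of degree $\le 2$ cannot vanish at three distinct points, to have the shape $\left(\begin{smallmatrix}1&0&0\\0&1&0\\g&h&\lambda\end{smallmatrix}\right)$, and similarly for T' --- restricts it to each component of $E$, and matches it against $\tau'$ there. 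The translation parameters of a single element of $\uuPAut$ on the different components are not independent, so the resulting linear system in $g,h,\lambda$ (resp.\ in the analogous parameters for T') is consistent only when $\tau={\rm id}$; for T the three equations obtained by matching $\tau'$ on the three lines would otherwise force $3=0$ in $k$, which is impossible in characteristic zero. Hence $G(E)\cap\N=\{{\rm id}\}$, and the theorem follows. I expect this last step --- extracting a description of $\uuPAut$ sharp enough to carry out the cross-component compatibility check for each of the three degenerate curves --- to be the main obstacle.
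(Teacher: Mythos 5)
Your proposal is correct and takes essentially the same route as the paper: reduce via $\ddPAut=Z(E)\rtimes G(E)$ and $\N=Z(E)\rtimes\bigl(G(E)\cap\N\bigr)$ to the single claim $G(E)\cap\N=\{{\rm id}\}$, then verify this type by type by computing $\sigma\tau|_E\sigma^{-1}$ explicitly and checking that it extends to a linear automorphism of $\mathbb{P}^2$ only when $\tau={\rm id}$. One detail to adjust in Type T: matching $(x,y,z)\mapsto(x,y,gx+hy+\lambda z)$ against $\sigma\tau|_E\sigma^{-1}$ on the three lines does not produce $3=0$; it gives $g-h\ep^{j}=\ep^{2j}(1-i)$ for $j=0,1,2$, which forces $g=0$, $h=i-1$, and then $(1-i)\ep=(1-i)\ep^{2}$, hence $i=1$ --- same conclusion, different mechanism.
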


\begin{proof}
	Writing $\ddPAut=Z(E) \rtimes G(E)$ as in Table 2,
	it is enough to show that $G(E) \cap N(E,\sigma)=\{ {\rm id} \}$.
	
	Type T: For every $\tau=\begin{pmatrix}
		1 & 0 & 0 \\
		0 & 1 & 0 \\
		0 & 0 & i
	\end{pmatrix} \in G(E)$,
    $$\begin{cases}
    	\sigma\tau|_E\sigma^{-1}(a,-a,c)=(a,-a,(1-i)a+ic) \\
    	\sigma\tau|_E\sigma^{-1}(a,-\ep a,c)=(a,-\ep a,\ep^2(1-i)a+ic) \\
    	\sigma\tau|_E\sigma^{-1}(a,-\ep^2 a,c)=(a,-\ep^2a,\ep(1-i)a+ic).
    \end{cases}$$
    If $\tau \in N(E,\sigma)$, then
    there exists $\overline{\tau} \in \Aut_{k}\,\mathbb{P}^2$ such that $\sigma\tau|_E\sigma^{-1}=\overline{\tau}|_E$.
    Then $1-i=\ep^2(1-i)=\ep (1-i)$, so $i=1$.
	
	Type T$'$: For every $\tau=
	\begin{pmatrix}
		1 & 0 & 0 \\
		0 & e & 0 \\
		0 & 0 & e^2
	\end{pmatrix}
	\in G(E)$,
	$$\begin{cases}
		\sigma\tau|_E\sigma^{-1}(0,b,c)=(0,eb,e(1-e)b+e^2c) \\
		\sigma\tau|_E\sigma^{-1}(a,b,c)=(a,(e-1)a+eb,(e-1)^2a+2e(e-1)b+e^2c).
	\end{cases}$$
    If $\tau \in N(E,\sigma)$, then
    there exists $\overline{\tau} \in \Aut_{k}\,\mathbb{P}^2$ such that $\sigma\tau|_E\sigma^{-1}=\overline{\tau}|_E$.
    Then $e(1-e)=2e(e-1)$, so $e=1$.
	
	Type CC: For every $\tau=
	\begin{pmatrix}
		1 & 0 & 0 \\
		0 & e & 0 \\
		0 & 0 & e^2
	\end{pmatrix} \in G(E)=\ddPAut$,
    \begin{align*}
    &\sigma\tau|_E\sigma^{-1}(a,b,c) \\
    &=
    \left(
    a+(1-e)b,eb,-3\frac{(a+b)^2}{eb}+3(a+b)-eb+e^{-2}\left( 3\frac{a^2}{b}+3a+b+c \right)
    \right).
    \end{align*}
    If $\tau \in N(E,\sigma)$, then
    there exists $\overline{\tau} \in \Aut_{k}\,\mathbb{P}^2$ such that $\sigma\tau|_E\sigma^{-1}=\overline{\tau}|_E$.
    Then there exists $0 \neq e' \in k$ such that
    $$(1+(1-e)b,eb)=(1,e'b) \text{ in } \mathbb{P}^1$$
    for every $(1,b,c) \in E$, so $e'=e=1$.
\end{proof}
\begin{lem}\label{lem.ztau}
	Let $A=\mathcal{A}(E,\sigma)$ be a $3$-dimensional standard AS-regular algebra of Type S, S', NC or EC.
	For every $i \ge 1$, $\sigma^{i} \in \uuPAut=\ddPAut$
	if and only if $\sigma^{3i}={\rm id}$.
\end{lem}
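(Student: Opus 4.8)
The plan is to check the claim type by type, in each case combining the semidirect‑product decomposition $\ddPAut = Z(E) \rtimes G(E)$ from Table 2 with an explicit description of the action of $\sigma$, and hence of each power $\sigma^i$, on the irreducible components of $E$. The recurring point will be that $\sigma^i$ extends to $\mathbb{P}^2$ exactly when a single scalar equation $(\ast)$ holds, and that $(\ast)$ is also equivalent to $\sigma^{3i}={\rm id}$; so for each type it is enough to establish ``$\sigma^i \in \ddPAut \iff (\ast)$'' together with ``$\sigma^{3i}={\rm id} \iff (\ast)$''.

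For Type S I would use that $E=\mathcal{V}(x)\cup\mathcal{V}(y)\cup\mathcal{V}(z)$ is a triangle of lines, each preserved by $\sigma$, and that on suitable affine coordinates of $\mathcal{V}(x)$, $\mathcal{V}(y)$, $\mathcal{V}(z)$ the map $\sigma$ is multiplication by $\alpha$, $\alpha^{-1}$, $\alpha$, so that $\sigma^i$ is multiplication by $\alpha^i$, $\alpha^{-i}$, $\alpha^i$. Since $\sigma^i$ preserves each coordinate line, any extension of $\sigma^i$ to $\mathbb{P}^2$ must be a diagonal matrix, and writing out how a diagonal matrix scales the three coordinates one sees that the resulting system of three equations is solvable if and only if $\alpha^{3i}=1$. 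On the other hand $\sigma^{3i}$ scales those three coordinates by $\alpha^{\pm 3i}$, and the three lines cover $E$, so $\sigma^{3i}={\rm id}$ iff $\alpha^{3i}=1$. Types S$'$ and NC are then handled in the same spirit after fixing parametrizations: for S$'$, $E$ is a smooth conic together with a secant line, on which $\sigma$ is multiplication by $\alpha$ on an affine coordinate of the line and by $\alpha^{-1}$ on a rational parameter of the conic; for NC, the smooth locus of the nodal cubic $E$ is isomorphic to $\mathbb{G}_m$, on which $\sigma$ is multiplication by $\alpha$. Reading the corresponding automorphisms off Table 2 and matching multipliers --- and noting that the extra, ``inversion-type'', elements of $\ddPAut$ cannot coincide with any $\sigma^i$ because $\sigma^i$ fixes the relevant base points --- one gets $\sigma^i \in \ddPAut \iff \alpha^{3i}=1$ in both cases, while $\sigma^{3i}={\rm id} \iff \alpha^{3i}=1$ because the component(s) under consideration are dense in $E$.

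For Type EC the curve is an elliptic curve and, by Table 1, $\sigma=\sigma_p$ is a translation, so $\sigma^i=\sigma_{ip}$ and $\sigma^{3i}=\sigma_{3ip}$, which equals ${\rm id}$ precisely when $ip\in E[3]$. By Table 2, $\ddPAut=T[3]\rtimes\Aut_{k}(E,o)$; if $\sigma_{ip}=\sigma_q\tau$ with $q\in E[3]$ and $\tau\in\Aut_{k}(E,o)$, then $\sigma_{ip-q}=\sigma_{-q}\sigma_{ip}=\tau$ fixes $o$, which forces $ip=q\in E[3]$ and $\tau={\rm id}$. Hence $\sigma^i\in\ddPAut \iff ip\in E[3] \iff \sigma^{3i}={\rm id}$.

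I expect the main obstacle to be the bookkeeping in Types S, S$'$ and NC: the parametrizations of the components must be chosen with some care, and one has to track how both the matrices appearing in Table 2 and the powers $\sigma^i$ act on these parameters --- including checking compatibility at the points where two components meet --- in order to be sure that an extension of $\sigma^i$, if it exists, is forced into the particular diagonal or scaling form that yields the equation $\alpha^{3i}=1$. For Type EC the computation is short, the only ingredient beyond Table 2 being the elementary fact that a translation fixing the origin is the identity.
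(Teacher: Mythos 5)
Your argument is correct, but it is a genuinely different route from the paper's: the paper disposes of this lemma in one line by citing Itaba--Mori (\cite[Theorem 3.4]{IMo}), which already computes, for each of these types, when a power of $\sigma$ extends to $\mathbb{P}^2$ and when it is the identity. What you propose instead is a self-contained, case-by-case verification using only the explicit $\sigma$ from Table 1 and the description of $\ddPAut$ from Table 2, and the key intermediate facts you assert do check out: for Type S the extension of $\sigma^i$ is forced to be diagonal and the three compatibility conditions on the ratios of its eigenvalues multiply to $\alpha^{3i}=1$; for Type S$'$ the restriction of $\sigma^i$ to the conic is already linear and the obstruction is matching it on the line $\mathcal{V}(x)$; for Type NC one can verify directly (using $\la=\frac{\al^3-1}{\al}$) that under the parametrization $t\mapsto(1,t,\frac{1+t^3}{\la t})$ of the smooth locus $\cong\mathbb{G}_m$ the map $\sigma$ is multiplication by $\al$, while the elements of $\ddPAut$ restrict to multiplication by cube roots of unity or inversion-type maps; and for Type EC the decomposition $\ddPAut=T[3]\rtimes\Aut_k(E,o)$ forces $\sigma_{ip}\in\ddPAut$ iff $ip\in E[3]$. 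What the citation buys is brevity and uniformity; what your approach buys is transparency and independence from \cite{IMo}, at the cost of the parametrization bookkeeping you acknowledge (and a residual dependence on Table 2, i.e.\ on \cite{Y}, for the computation of $\ddPAut$). If you write it up, the only places needing real care are exactly the ones you flag: pinning down that an extension of $\sigma^i$, when it exists, is forced into diagonal form (Types S, S$'$), and ruling out the inversion-type elements (Types S$'$, NC); both are easy but should be stated explicitly.
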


\begin{proof}
	This Lemma follows from \cite[Theorem 3.4]{IMo}.
\end{proof}
\begin{lem}\label{lem.gnmn}
	Let $A=\mathcal{A}(E,\sigma)$ be a $3$-dimensional standard AS-regular algebra.
	If $\sigma\tau\sigma^{-1}, \sigma^{-1}\tau\sigma \in \uuPAut$ for every $\tau \in G(E)$,
	then $M(E,\sigma)=N(E,\sigma)=\ddPAut$.
\end{lem}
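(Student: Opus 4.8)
The plan is to deduce the statement from Lemma~\ref{lem.spps}. That lemma says that if $\sigma$ normalizes $\uuPAut$ inside $\Aut_k E$, i.e.\ $\sigma\,\uuPAut = \uuPAut\,\sigma$ as subsets, then $\M = \N = \ddPAut$; here we use that for a $3$-dimensional geometric AS-regular algebra the restriction map $\ddPAut \to \uuPAut$ is a bijection, so $\uuPAut$ may be regarded as a subgroup of $\Aut_k E$. Thus the whole task reduces to checking that the hypothesis ``$\sigma\tau\sigma^{-1},\ \sigma^{-1}\tau\sigma \in \uuPAut$ for every $\tau \in G(E)$'' upgrades from $G(E)$ to all of $\uuPAut$.

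To do this I would use the semidirect-product decomposition $\ddPAut = Z(E) \rtimes G(E)$ recorded in Table~2, transported along the bijection $\ddPAut \cong \uuPAut$. Given $\tau \in \uuPAut$, write $\tau|_E = \zeta|_E \circ g|_E$ with $\zeta \in Z(E)$ and $g \in G(E)$, so that $\sigma\tau|_E\sigma^{-1} = (\sigma\zeta|_E\sigma^{-1})\circ(\sigma g|_E\sigma^{-1})$. The first factor equals $\zeta|_E$ because $Z(E) \subset Z(E,\sigma)$, and by the very definition of $Z(E,\sigma)$ this means $\sigma\zeta|_E\sigma^{-1} = \zeta|_E \in \uuPAut$; the second factor lies in $\uuPAut$ by hypothesis. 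Since $\uuPAut$ is a subgroup of $\Aut_k E$, the composite lies in $\uuPAut$, so $\sigma\,\uuPAut\,\sigma^{-1} \subseteq \uuPAut$; running the same argument with $\sigma^{-1}$ in place of $\sigma$ (using $\sigma^{-1}g\sigma \in \uuPAut$ and $\sigma^{-1}\zeta|_E\sigma = \zeta|_E$) gives $\sigma^{-1}\,\uuPAut\,\sigma \subseteq \uuPAut$. The two inclusions together force $\sigma\,\uuPAut\,\sigma^{-1} = \uuPAut$, hence $\sigma\,\uuPAut = \uuPAut\,\sigma$, and Lemma~\ref{lem.spps} finishes the proof.

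There is no genuinely hard step; the only points that need care are (i) using the decomposition $\tau|_E = \zeta|_E \circ g|_E$ with the correct composition order, so that the factor absorbed by the relation $Z(E) \subset Z(E,\sigma)$ is the one coming from $Z(E)$ rather than from $G(E)$, and (ii) recording explicitly that $\uuPAut$ is a subgroup of $\Aut_k E$, so that a product of two of its elements lies in it. I would also remark that, combined with Theorem~\ref{thm.ctt}, this lemma reduces the computation of $\M$ and $\N$ for the remaining standard types to verifying the hypothesis on a single generator $\tau_E$ of the (finite) group $G(E)$, which is a short type-by-type calculation.
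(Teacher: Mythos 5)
Your proposal is correct and is essentially the paper's own argument: decompose $\tau\in\ddPAut$ as a product of an element of $Z(E)$ (which commutes with $\sigma$ on $E$ since $Z(E)\subset Z(E,\sigma)$) and an element of $G(E)$ (handled by the hypothesis), conclude that $\sigma$ conjugates $\uuPAut$ into itself on both sides, and invoke Lemma \ref{lem.spps}. The only cosmetic difference is that the paper switches the order of the two factors for the $\sigma^{-1}\tau\sigma$ case, whereas you keep the same decomposition and commute the $Z(E)$-factor past $\sigma^{-1}$ directly; both are valid for the same reason.
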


\begin{proof}
	Every $\tau \in \ddPAut$ can be written as $\tau=\tau_1\tau_2$ where $\tau_1 \in Z(E)$, $\tau_2 \in G(E)$.
	Since $\sigma\tau\sigma^{-1}=\sigma\tau_1\tau_2\sigma^{-1}=\tau_1\sigma\tau_2\sigma^{-1}$,
	it holds that $\sigma\tau\sigma^{-1} \in \uuPAut$.
	Similarly, every $\tau \in \ddPAut$ can be written as $\tau=\tau_2\tau_1$ where $\tau_1 \in Z(E)$,
	$\tau_2 \in G(E)$, so $\sigma^{-1}\tau\sigma=\sigma^{-1}\tau_2\tau_1\sigma=
	\sigma^{-1}\tau_2\sigma\tau_{1}$, hence $\sigma^{-1}\tau\sigma \in \uuPAut$.
	The result now follows from Lemma \ref{lem.spps}.
\end{proof}

\begin{thm}\label{thm.62zm}
	Let $A=\mathcal{A}(E,\sigma)$ be a $3$-dimensional standard AS-regular algebra
	such that $|G(E)|=2$.
	\begin{enumerate}[{\rm (1)}]
		\item If $\sigma^2={\rm id}$, then $Z(E,\sigma)=M(E,\sigma)=N(E,\sigma)=\ddPAut$.
		\item If $\sigma^{6}={\rm id}$, then $M(E,\sigma)=N(E,\sigma)=\ddPAut$.
		\item If $\sigma^{6} \neq {\rm id}$, then $Z(E)=Z(E,\sigma)=M(E,\sigma)=N(E,\sigma)$.
	\end{enumerate}
\end{thm}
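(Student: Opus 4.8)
The plan is to exploit that, since $|G(E)|=2$, we may write $G(E)=\{{\rm id},g\}$ for a single involution $g$, so that $\ddPAut=Z(E)\sqcup Z(E)g=Z(E)\rtimes\langle g\rangle$. First I would record, from the Remark following Table~2, that the standard algebras with $|G(E)|=2$ are exactly those of Type S, S', NC, together with Type EC when $j(E)\neq 0,12^3$; in the EC case $g=\tau_E$ is the unique nontrivial element of $\Aut_{k}(E,o)$, hence equals the inversion $[-1]$ of $E$ (the only group automorphism of order $2$), and $\sigma=\sigma_p$ is a translation. Recalling $Z(E)\subseteq\Z\subseteq\M\subseteq\N\subseteq\ddPAut$, that $\Z$ and $\N$ are subgroups, and that $\N=Z(E)\rtimes(G(E)\cap\N)$, I observe that each of $\Z$ and $\N$ equals $\ddPAut$ if it contains $g$ and equals $Z(E)$ otherwise, while $\M$ is trapped between $\Z$ and $\N$. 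So everything is controlled by the single automorphism $\sigma g|_E\sigma^{-1}$ of $E$.

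The crux is the following claim, to be proved by a direct computation from Table~1 in each of the four types: $\sigma g|_E\sigma^{-1}\in\uuPAut$ if and only if $\sigma^6={\rm id}$, and $\sigma g|_E\sigma^{-1}=g|_E$ if and only if $\sigma^2={\rm id}$. For Type EC this is immediate: $\sigma g\sigma^{-1}=\sigma_p\circ[-1]\circ\sigma_{-p}=\sigma_{2p}\circ[-1]$, which lies in $\ddPAut=T[3]\rtimes\langle[-1]\rangle$ iff $2p\in E[3]$ iff $6p=o$ iff $\sigma^6=\sigma_{6p}={\rm id}$, and equals $[-1]=g$ iff $2p=o$ iff $\sigma^2={\rm id}$. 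For Types S, S', NC I would first check, using the explicit $\sigma$ of Table~1, that $\sigma^n={\rm id}$ on $E$ if and only if $\alpha^n=1$, so that $\sigma^6={\rm id}\Leftrightarrow\alpha^6=1$ and $\sigma^2={\rm id}\Leftrightarrow\alpha^2=1$; then I would compute $\sigma g|_E\sigma^{-1}$ explicitly, observe that it permutes the irreducible components of $E$ in the same way as $g$ does, write down the unique element $\tau$ of the coset $Z(E)g\subseteq\ddPAut$ inducing that permutation of components (unique since $\ddPAut\to\uuPAut$ is a bijection), and verify that the identity $\tau|_E=\sigma g|_E\sigma^{-1}$ is equivalent to a relation on $\alpha$ which simplifies to $\alpha^6=1$; since moreover $\tau=g$ exactly when $\alpha^2=1$, the claim follows. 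For Type NC this is the delicate case: the third coordinate of $\sigma g|_E\sigma^{-1}$ carries terms in $a^2/b$ and $b^2/a$, and to recognise $\sigma g|_E\sigma^{-1}$ as the restriction of a linear automorphism one must reduce modulo the equation $x^3+y^3=\la xyz$ of $E$ and use the value $\la=(\alpha^3-1)/\alpha$; these conspire to make the reduction work precisely when $\alpha^3=-1$, i.e.\ $\alpha^6=1$.

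Granting the claim, the three parts follow formally. For (1): if $\sigma^2={\rm id}$ then $\sigma g|_E\sigma^{-1}=g|_E$, so $g\in\Z$; as $\Z$ is a subgroup containing $Z(E)$ and $g$, it contains $Z(E)\rtimes\langle g\rangle=\ddPAut$, whence $\Z=\ddPAut$ and therefore $\M=\N=\ddPAut$. For (2): if $\sigma^6={\rm id}$ then $\sigma g|_E\sigma^{-1}\in\uuPAut$, and also $\sigma^2\in\uuPAut$ by Lemma~\ref{lem.ztau} (with $i=2$); since $\uuPAut=\ddPAut$ is a group, $\sigma^{-1}g|_E\sigma=\sigma^{-2}(\sigma g|_E\sigma^{-1})\sigma^2\in\uuPAut$ as well, so $\sigma\tau\sigma^{-1},\sigma^{-1}\tau\sigma\in\uuPAut$ for every $\tau\in G(E)$, and Lemma~\ref{lem.gnmn} gives $\M=\N=\ddPAut$. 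For (3): if $\sigma^6\neq{\rm id}$ then $\sigma g|_E\sigma^{-1}\notin\uuPAut$, i.e.\ $g\notin\N$; since $G(E)=\{{\rm id},g\}$ this forces $\N=Z(E)$, and together with $Z(E)\subseteq\Z\subseteq\M\subseteq\N$ we conclude $Z(E)=\Z=\M=\N$.

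I expect the main obstacle to be the Type NC instance of the claim above: pushing the non-linear automorphism $\sigma$ through $g$ and then recognising the resulting automorphism of the cubic $E$ as a restriction of an element of $\ddPAut$ precisely when $\alpha^6=1$ requires a careful reduction modulo the curve equation and use of the specific value of $\la$. The EC case is essentially free, and the S, S' cases amount to routine bookkeeping with monomial matrices.
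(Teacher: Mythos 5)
Your proposal is correct, and its skeleton coincides with the paper's: everything is reduced to whether the single nontrivial coset representative $g=\tau_E$ of $G(E)$ lies in $\Z$ (for (1)) or in $\N$ (for (2) and (3)); part (3) uses $\N=Z(E)\rtimes(G(E)\cap\N)$, and part (2) feeds $\sigma\tau_E\sigma^{-1},\sigma^{-1}\tau_E\sigma\in\uuPAut$ into Lemma \ref{lem.gnmn}. The one genuine difference is how you establish the pivotal equivalence $\sigma\tau_E\sigma^{-1}\in\uuPAut\Leftrightarrow\sigma^{6}=\mathrm{id}$. The paper notes that in all four relevant types $(\tau_E\sigma)^{2}=\mathrm{id}$, so $\sigma\tau_E\sigma=\tau_E$ and hence $\sigma\tau_E\sigma^{-1}=\tau_E\sigma^{-2}$; since $\tau_E\in\ddPAut$, the question collapses to whether $\sigma^{2}$ extends to $\mathbb{P}^{2}$, which Lemma \ref{lem.ztau} answers uniformly. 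You instead compute $\sigma\tau_E\sigma^{-1}$ in coordinates type by type and test linearity directly. This does work --- your reading of the Type NC case is accurate: after clearing denominators the obstruction to linearity is a multiple of $\al^{3}+1$ modulo the curve equation, so it vanishes exactly when $\al^{3}=-1$, i.e.\ $\al^{6}=1$ given $\al^{3}\neq 1$ --- but it is precisely the computation the paper's involution identity is designed to bypass, and for Type NC it is genuinely laborious. Your derivation of $\sigma^{-1}\tau_E\sigma\in\uuPAut$ from $\sigma\tau_E\sigma^{-1}\in\uuPAut$ by conjugating with $\sigma^{2}$ (itself in $\uuPAut$ by Lemma \ref{lem.ztau}) is a clean substitute for the paper's symmetric identity $\tau_E^{-1}\sigma^{-1}\tau_E\sigma=\sigma^{2}$. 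The remaining pieces --- the EC case via $\sigma_{p}\circ[-1]\circ\sigma_{-p}=\sigma_{2p}\circ[-1]\in T[3]\rtimes\langle[-1]\rangle$ iff $2p\in E[3]$, part (1) via $\sigma\tau_E\sigma^{-1}=\tau_E$ when $\sigma^{2}=\mathrm{id}$, and part (3) via $G(E)\cap\N=\{\mathrm{id}\}$ --- match the paper exactly.
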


\begin{proof}
	(1) We will give a proof for Type S'.
	The other types are proved similarly.
	By Lemma \ref{lem.SP} and Table 2, $E=\mathcal{V}(x) \cup \mathcal{V}(x^2-\la yz)$,
	$$\begin{cases}
		\sigma(0,b,c)=(0,b,\al c) \\
		\sigma(a,b,c)=(a,\al b,\al^{-1}c)
	\end{cases}$$
    where $\la=\frac{\al^3-1}{\al}$ and $\al^3 \neq 0,1$, and
    $\tau_E=
    \begin{pmatrix}
    	1 & 0 & 0 \\
    	0 & 0 & 1 \\
    	0 & 1 & 0 
    \end{pmatrix}$.
	In general, $Z(E) \subset Z(E,\sigma) \subset M(E,\sigma) \subset N(E,\sigma) \subset \ddPAut$.
	In this case, $\sigma^2={\rm id}$ if and only if $\al^{2}=1$.
	Since
	$$\begin{cases}
		\sigma\tau_E\sigma^{-1}(0,b,c)=(0,c,\al^2 b) \\
		\sigma\tau_E\sigma^{-1}(a,b,c)=(a,\al^{2}c,\al^{-2}b),
	\end{cases}$$
    if $\sigma^{2}={\rm id}$, then $\sigma\tau_E\sigma^{-1}=\tau_E$. Since $\tau_E \in Z(E,\sigma)$,
    $Z(E,\sigma)=\ddPAut$.
	
	(2) By direct calculations, $(\tau_E\sigma)^2={\rm id}$, so
	$\sigma\tau_E\sigma\tau_E^{-1}=\sigma^2=\tau_E^{-1}\sigma^{-1}\tau_E\sigma$.
	By Lemma \ref{lem.ztau}, $\sigma\tau_E\sigma^{-1}\tau_E^{-1}, \tau_E^{-1}\sigma^{-1}\tau_E\sigma \in \ddPAut$
	if and only if $\sigma^6={\rm id}$. In particular, if $\sigma^6={\rm id}$, then $\sigma\tau_E\sigma^{-1},
	\sigma^{-1}\tau_E\sigma \in \ddPAut$.
	By Lemma \ref{lem.gnmn}, $M(E,\sigma)=N(E,\sigma)=\ddPAut$, hence (2) holds.
	
	(3) If $\sigma^6 \neq {\rm id}$, then $\tau_E \notin N(E,\sigma)$.
	Since $G(E) \cap N(E,\sigma)=\{ {\rm id} \}$, $N(E,\sigma)=Z(E)$, hence (3) holds.
\end{proof}
\begin{thm}\label{thm.listNEC}
	Let $A=\mathcal{A}(E,\sigma)$ be a $3$-dimensional standard AS-regular algebra except for Type EC.
	Then the following list gives $Z(E,\sigma)$ and $M(E,\sigma)$ for each type;
	\begin{center}
		{\renewcommand\arraystretch{1.1}
			{\small
				\begin{tabular}{|c|p{5.5cm}|p{5.5cm}|}
					\multicolumn{3}{c}{ \rm Table 3}
					\\[5pt]
					\hline
					{\rm Type}
					& $Z(E,\sigma)$
					& $M(E,\sigma)$
					\\ \hline\hline
					$\rm{P}$ 
					& 
					\fontsize{6pt}{10pt}\selectfont$\PGl_{3}(k)$
					& \fontsize{6pt}{10pt}\selectfont$\PGl_{3}(k)$
					\\ \hline
					$\rm{T}$ 
					& \fontsize{6pt}{10pt}\selectfont
					$\left\{
					\begin{pmatrix}
						1 & 0 & 0 \\
						0 & e & 0 \\
						g & h & 1
					\end{pmatrix} \middle| e^3=1
					\right\} \rtimes \left\langle
					\begin{pmatrix}
						0 & 1 & 0 \\
						1 & 0 & 0 \\
						0 & 0 & -1
					\end{pmatrix} \right\rangle
					$
					& \fontsize{6pt}{10pt}\selectfont
					$\left\{
					\begin{pmatrix}
						1 & 0 & 0 \\
						0 & e & 0 \\
						g & h & 1
					\end{pmatrix} \middle| e^3=1
					\right\} \rtimes \left\langle
					\begin{pmatrix}
						0 & 1 & 0 \\
						1 & 0 & 0 \\
						0 & 0 & -1
					\end{pmatrix} \right\rangle$
					\\ \hline
					$\rm{T'}$ 
					& \fontsize{6pt}{10pt}\selectfont$\left\{
					\begin{pmatrix}
						1 & 0 & 0 \\
						d & 1 & 0 \\
						d^2 & 2d & 1
					\end{pmatrix}
					\right\}$
					& \fontsize{6pt}{10pt}\selectfont
					$
					\left\{
					\begin{pmatrix}
						1 & 0 & 0 \\
						d & 1 & 0 \\
						d^2 & 2d & 1
					\end{pmatrix}
					\right\}$
					\\ \hline
					
					$\rm{CC}$ 
					& \fontsize{6pt}{10pt}\selectfont$
					\left\{
					\begin{pmatrix}
						1 & 0 & 0 \\
						0 & 1 & 0 \\
						0 & 0 & 1
					\end{pmatrix}
					\right\}
					$
					& \fontsize{6pt}{10pt}\selectfont$
					\left\{
					\begin{pmatrix}
						1 & 0 & 0 \\
						0 & 1 & 0 \\
						0 & 0 & 1
					\end{pmatrix}
					\right\}$
					\\ \hline

				\end{tabular}
				}}
			\end{center}
	\begin{center}
        {\renewcommand\arraystretch{1.1}
          {\small
                   \begin{tabular}{|c|p{5.5cm}|p{5.5cm}|}
                    \hline
                    $\rm{S}$ 
                    & \fontsize{6pt}{10pt}\selectfont$
                    \begin{cases}
	                D \rtimes \left\langle
	                \begin{pmatrix}
		            0 & 1 & 0 \\
		            0 & 0 & 1 \\
		            1 & 0 & 0
	                \end{pmatrix} \right\rangle
	                \text{ if }\,\,\,\sigma^2 \neq {\rm id}, \\
	                \ddPAut
	                \text{ if }\,\,\,\sigma^2={\rm id}.
                    \end{cases}$
                    & \fontsize{6pt}{10pt}\selectfont$
                    \begin{cases}
	                D \rtimes \left\langle
	                \begin{pmatrix}
		            0 & 1 & 0 \\
		            0 & 0 & 1 \\
		            1 & 0 & 0
	                \end{pmatrix} \right\rangle
	                \text{ if }\,\,\,\sigma^6 \neq {\rm id}, \\
	                \ddPAut
	                \text{ if }\,\,\,\sigma^6={\rm id}.
                    \end{cases}
                    $
                    \\[18pt] \hline
                    $\rm{S'}$ 
                    & \fontsize{6pt}{10pt}\selectfont$
                    \begin{cases}
	                \left\{
	                \begin{pmatrix}
		            1 & 0 & 0 \\
		            0 & e & 0 \\
		            0 & 0 & e^{-1}
	                \end{pmatrix} \middle| e \neq 0
	                \right\}
	                &\text{ if }\,\,\,\sigma^2 \neq {\rm id} \\
	                \ddPAut &\text{ if }\,\,\,\sigma^2={\rm id}.
                    \end{cases}$
                    & \fontsize{6pt}{10pt}\selectfont
                    $
                    \begin{cases}
	                \left\{
	                \begin{pmatrix}
	            	1 & 0 & 0 \\
		            0 & e & 0 \\
		            0 & 0 & e^{-1}
	                \end{pmatrix} \middle| e \neq 0
	                \right\}
	                &\text{ if }\,\,\,\sigma^6 \neq {\rm id} \\
                 	\ddPAut &\text{ if }\,\,\,\sigma^6={\rm id}.
                    \end{cases}
                    $
                    \\[18pt] \hline
					$\rm{NC}$ 
					& \fontsize{6pt}{10pt}\selectfont$
					\begin{cases}
						\left\langle
						\begin{pmatrix}
							1 & 0 & 0 \\
							0 & \ep & 0 \\
							0 & 0 & \ep^2
						\end{pmatrix}\right\rangle &\text{if}\,\,\,\sigma^2 \neq {\rm id} \\
						\ddPAut &\text{if}\,\,\,\sigma^2={\rm id}
					\end{cases}$
					& \fontsize{9pt}{13pt}\selectfont
					$
					\begin{cases}
						\left\langle
						\begin{pmatrix}
							1 & 0 & 0 \\
							0 & \ep & 0 \\
							0 & 0 & \ep^2
						\end{pmatrix}\right\rangle &\text{if}\,\,\,\sigma^6 \neq {\rm id} \\
						\ddPAut &\text{if}\,\,\,\sigma^6={\rm id}
					\end{cases}$
					\\[18pt] \hline
				\end{tabular}
		}}
	\end{center}
     	where $D:=\left\{
     \begin{pmatrix}
     	1 & 0 & 0 \\
     	0 & e & 0 \\
     	0 & 0 & i
     \end{pmatrix} \middle| ei \neq 0
     \right\}$.
\end{thm}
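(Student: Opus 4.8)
The plan is to argue type by type, using the decomposition $\ddPAut=Z(E)\rtimes G(E)$ of Table 2 together with the results already established. \textbf{Type P} is immediate: here $\sigma={\rm id}$ and $E=\mathbb{P}^2$, so $\ddPAut=\uuPAut=\PGl_3(k)$ and the defining conditions for $Z(E,\sigma)$ and $M(E,\sigma)$ are vacuous; hence both equal $\PGl_3(k)$, as recorded in Table 3.

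For \textbf{Types T, T$'$, CC} I would first note that $|\sigma|=\infty$ (cf. \cite{IMo}), so Theorem \ref{thm.ctt} applies and yields $Z(E,\sigma)=M(E,\sigma)=N(E,\sigma)$; it then remains only to identify this single group. From the semidirect decomposition one has $N(E,\sigma)=Z(E)\rtimes(G(E)\cap N(E,\sigma))$, and the computations carried out inside the proof of Theorem \ref{thm.ctt} show $G(E)\cap N(E,\sigma)=\{{\rm id}\}$ for each of these three types. Hence $Z(E,\sigma)=M(E,\sigma)=N(E,\sigma)=Z(E)$, which, after a choice of convenient representatives in $\PGl_3(k)$, is the entry in Table 3.

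For \textbf{Types S, S$'$, NC} --- the remaining cases, characterized by $|G(E)|=2$ --- I would use Theorem \ref{thm.62zm} for the $M(E,\sigma)$ column: part (2) gives $M(E,\sigma)=\ddPAut$ when $\sigma^{6}={\rm id}$, while part (3) gives $M(E,\sigma)=Z(E)$ when $\sigma^{6}\neq{\rm id}$, so reading $Z(E)$ and $\ddPAut$ off Table 2 produces those entries. For the $Z(E,\sigma)$ column I would invoke Lemma \ref{Ztwist}(2), namely $Z(E,\sigma)\cong({\rm PGrAut}_{k}\,A)^{\rm op}$, together with the computation of ${\rm PGrAut}_{k}\,A$ from \cite{Y} (recalled above for Type S via \cite[Theorem 3.3.1]{Y}): it is $\ddPAut$ when $\sigma^{2}={\rm id}$ and $Z(E)$ otherwise. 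Assembling these completes the table.

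The main obstacle, such as it is, will be bookkeeping: in each row one must line up the trichotomy $\sigma^{2}={\rm id}$ / ($\sigma^{6}={\rm id}$ but $\sigma^{2}\neq{\rm id}$) / $\sigma^{6}\neq{\rm id}$ with the correct group, and one must normalize matrix representatives in $\PGl_3(k)$ so that $Z(E)$ appears in the form used in Table 3. The substantive difficulties were already resolved in Theorems \ref{thm.ctt} and \ref{thm.62zm} and in the descriptions of $\ddPAut$, $Z(E)$ and ${\rm PGrAut}_{k}\,A$ imported from \cite{Y}, so no genuinely new difficulty arises here beyond this assembly.
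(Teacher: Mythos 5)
Your proposal is correct and follows essentially the same route as the paper, whose entire proof is the citation of Theorem \ref{thm.ctt} (for Types T, T$'$, CC) and Theorem \ref{thm.62zm} (for Types S, S$'$, NC), with the explicit groups read off from Table 2 and the computations of $Z(E,\sigma)\cong({\rm PGrAut}_{k}\,A)^{\rm op}$ imported from \cite{Y}. Your additional care in treating Type P separately (it is covered by neither cited theorem, since $|G(E)|=1$ there) and in noting that the $Z(E,\sigma)$ column in the intermediate case $\sigma^2\neq{\rm id}$, $\sigma^6={\rm id}$ rests on the Step 2 computation rather than on Theorem \ref{thm.62zm} itself is a welcome, if minor, improvement on the paper's one-line argument.
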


\begin{proof}
	By Theorem \ref{thm.ctt} and Theorem \ref{thm.62zm}, the result hold.
\end{proof}
\begin{defin}
	Let $E=\mathcal{V}(x^3+y^3+z^3) \subset \mathbb{P}^2$ so that $j(E)=0$, and define
	$$\mathcal{E}:=\{ (a,b,c) \in E \mid a^9=b^9=c^9 \} \subset E[9] \setminus E[6].$$
	In this paper, we say that a $3$-dimensional quadratic AS-regular algebra is {\it exceptional}
	if it is graded Morita equivalent to $\mathcal{A}(E,\sigma_{p})$ for some $p \in \mathcal{E}$.
\end{defin}
\begin{lem}\label{lem.MN}
	Let $A=\mathcal{A}(E,\sigma_{p})$ be a $3$-dimensional standard AS-regular algebra of Type EC
	and $\sigma_{q}\tau_{E}^{i} \in \ddPAut$ where $q \in E[3]$, $i \in \mathbb{Z}_{|\tau_E|}$.
	Then
	\begin{enumerate}
		\item $\sigma_{q}\tau_{E}^i \in Z(E,\sigma_{p})$ if and only if $p-\tau_{E}^i(p)=o$,
		\item $\sigma_{q}\tau_{E}^i \in N(E,\sigma_{p})$ if and only if $p-\tau_{E}^i(p) \in E[3]$, and
		\item $M(E,\sigma_{p})=N(E,\sigma_{p})$.
	\end{enumerate}
\end{lem}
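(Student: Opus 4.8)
The plan uses three standard facts about $\Aut_{k}\,E$ for an elliptic curve $E\subset\mathbb{P}^2$, together with the description $\ddPAut=Z(E)\rtimes G(E)=T[3]\rtimes\Aut_{k}(E,o)$ from Table 2 and the identification $\ddPAut\cong\uuPAut$ (under which $\tau$ and $\tau|_{E}$ are interchangeable). First, every $\phi\in\Aut_{k}\,E$ is \emph{uniquely} of the form $\sigma_{r}\tau_{E}^{j}$ with $r\in E$ and $j\in\mathbb{Z}_{|\tau_{E}|}$: evaluating at $o$ recovers $r$, and then $\sigma_{-r}\phi\in\Aut_{k}(E,o)=\langle\tau_{E}\rangle$ by Lemma \ref{lem.tau}. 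Second, since $\tau_{E}$ fixes $o$ it is a group automorphism, so $\tau_{E}\sigma_{s}\tau_{E}^{-1}=\sigma_{\tau_{E}(s)}$; this gives the composition rule $(\sigma_{r}\tau_{E}^{j})(\sigma_{s}\tau_{E}^{l})=\sigma_{r+\tau_{E}^{j}(s)}\tau_{E}^{j+l}$ and also shows that $\tau_{E}$ preserves each $E[n]$. Third, because $\uuPAut$ is (the image of) $T[3]\rtimes\langle\tau_{E}\rangle$ and the decomposition above is unique, an automorphism $\sigma_{r}\tau_{E}^{j}$ of $E$ lies in $\uuPAut$ if and only if $r\in E[3]$.

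Parts (1) and (2) then reduce to a single computation. With $\tau=\sigma_{q}\tau_{E}^{i}$, the composition rule gives
$$\sigma_{p}\,\tau|_{E}\,\sigma_{p}^{-1}=\sigma_{p+q}\,\tau_{E}^{i}\,\sigma_{-p}=\sigma_{q+(p-\tau_{E}^{i}(p))}\,\tau_{E}^{i}.$$
Since $q\in E[3]$, the translation point $q+(p-\tau_{E}^{i}(p))$ belongs to $E[3]$ exactly when $p-\tau_{E}^{i}(p)\in E[3]$, which by the third fact is precisely the condition $\sigma_{p}\tau|_{E}\sigma_{p}^{-1}\in\uuPAut$, i.e.\ $\tau\in N(E,\sigma_{p})$; this is (2). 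Likewise $\sigma_{p}\tau|_{E}\sigma_{p}^{-1}=\tau|_{E}$ holds iff $\sigma_{q+(p-\tau_{E}^{i}(p))}=\sigma_{q}$, i.e.\ iff $p-\tau_{E}^{i}(p)=o$, which is (1).

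For (3), the inclusion $M(E,\sigma_{p})\subseteq N(E,\sigma_{p})$ is automatic, so only the reverse is at issue. The key point is that conjugation by $\sigma_{p}^{\pm1}$ inside $\Aut_{k}\,E$ preserves $N(E,\sigma_{p})$: for $\rho=\sigma_{s}\tau_{E}^{m}\in N(E,\sigma_{p})$, the computation above gives $\sigma_{p}^{\pm1}\rho\,\sigma_{p}^{\mp1}=\sigma_{s\pm(p-\tau_{E}^{m}(p))}\,\tau_{E}^{m}$, which has the \emph{same} $\tau_{E}$-exponent $m$ and a translation point again in $E[3]$ (a sum of elements of $E[3]$), hence lies in $N(E,\sigma_{p})$ by (2); equivalently, $\sigma_{p}N(E,\sigma_{p})=N(E,\sigma_{p})\sigma_{p}$. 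With $\tau|_{E}\in N(E,\sigma_{p})$, this is exactly the input needed to rerun the induction in the proof of Lemma \ref{lem.spps} with $N(E,\sigma_{p})$ in place of $\uuPAut$: from
\begin{align*}
(\tau|_{E}\sigma_{p})^{i+1}\sigma_{p}^{-i-1}&=\tau|_{E}\cdot\sigma_{p}\big((\tau|_{E}\sigma_{p})^{i}\sigma_{p}^{-i}\big)\sigma_{p}^{-1},\\
(\tau|_{E}\sigma_{p})^{-i-1}\sigma_{p}^{i+1}&=\sigma_{p}^{-1}\tau|_{E}^{-1}\big((\tau|_{E}\sigma_{p})^{-i}\sigma_{p}^{i}\big)\sigma_{p},
\end{align*}
and $(\tau|_{E}\sigma_{p})^{0}\sigma_{p}^{0}=\mathrm{id}\in N(E,\sigma_{p})$, induction on $|i|$ gives $(\tau|_{E}\sigma_{p})^{i}\sigma_{p}^{-i}\in N(E,\sigma_{p})\subseteq\uuPAut$ for every $i\in\mathbb{Z}$, i.e.\ $\tau\in M(E,\sigma_{p})$. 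One could instead avoid the induction and expand $(\tau|_{E}\sigma_{p})^{i}\sigma_{p}^{-i}=\sigma_{t_{i}}\tau_{E}^{ij}$ with $t_{i}=\sum_{l=0}^{i-1}\tau_{E}^{lj}\big(q+\tau_{E}^{j}(p)\big)-i\,\tau_{E}^{ij}(p)$, then verify $t_{i}\in E[3]$ by reducing modulo $E[3]$: the $q$-terms vanish since $q\in E[3]$, and the $p$-terms telescope into a sum of $\tau_{E}^{mj}\big(p-\tau_{E}^{j}(p)\big)\in E[3]$. The only delicate point I anticipate is making sure all $i\in\mathbb{Z}$ are covered rather than just $i\ge1$; that is why I carry the conjugation-invariance of $N(E,\sigma_{p})$ under $\sigma_{p}^{-1}$ (equivalently the telescoping identity for negative exponents) explicitly, as the analogue of the second half of the proof of Lemma \ref{lem.spps}.
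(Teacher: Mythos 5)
Your proof is correct. Parts (1) and (2) follow the paper's computation exactly: the identity $\sigma_{p}(\sigma_{q}\tau_{E}^{i})\sigma_{p}^{-1}=\sigma_{q+p-\tau_{E}^{i}(p)}\tau_{E}^{i}$ is the whole content, and your only departure is that where the paper invokes \cite[Lemma 5.3]{Mo} to say that $\sigma_{r}\tau_{E}^{j}$ extends to $\mathbb{P}^2$ iff $r\in E[3]$, you derive this from Table 2 and the uniqueness of the decomposition $\sigma_{r}\tau_{E}^{j}$ (recovering $r$ as the image of $o$); that makes the step self-contained at no real cost. For part (3) your route is genuinely different in form, though equivalent in substance: the paper writes out $(\sigma_{q}\tau_{E}^{i}\sigma_{p})^{\pm j}\sigma_{p}^{\mp j}=\sigma_{r_{j}}\tau_{E}^{\pm ji}$ with explicit formulas for the translation points $r_{j},s_{j}$ and then cites \cite[Lemma 4.19]{IM} to place them in $E[3]$, whereas you observe that $N(E,\sigma_{p})$ is a subgroup stable under conjugation by $\sigma_{p}^{\pm1}$ (which is exactly the conclusion of your computation in (2) applied to an element already known to lie in $N(E,\sigma_{p})$) and then rerun the induction from Lemma \ref{lem.spps} with $N(E,\sigma_{p})$ in place of $\uuPAut$. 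Your version avoids both the explicit telescoping formulas and the external citation, and it isolates the structural reason the induction closes up --- the hypothesis $\sigma N=N\sigma$ playing the role that $\sigma\,\uuPAut=\uuPAut\,\sigma$ plays in Lemma \ref{lem.spps}; the paper's version has the advantage of exhibiting the translation points explicitly. You are also right to flag the negative exponents: both your conjugation-invariance argument and the paper's separate formula for $s_{j}$ are needed precisely to cover $i<0$.
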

\begin{proof}
	(1) Since $\sigma_{p}(\sigma_{q}\tau_{E}^i)\sigma_{p}^{-1}=\sigma_{q+p-\tau_{E}^{i}(p)}\tau_{E}^i$,
	$\sigma_{q}\tau_{E}^{i} \in Z(E,\sigma_{p})$ if and only if $p-\tau_{E}^{i}(p)=o$.
	
	(2) Since $\sigma_{p}(\sigma_{q}\tau_{E}^i)\sigma_{p}^{-1}=\sigma_{q+p-\tau_{E}^{i}(p)}\tau_{E}^i$,
	by \cite[Lemma 5.3]{Mo}, $\sigma_{q}\tau_{E}^{i} \in N(E,\sigma_{p})$ if and only if    
	$p-\tau_{E}^{i}(p) \in E[3]$.

	(3) In general, $M(E,\sigma_{p}) \subset N(E,\sigma_{p})$,
	so it is enough to show that $N(E,\sigma_{p}) \subset M(E,\sigma_{p})$.
	Let $\sigma_{q}\tau_E^{i} \in N(E,\sigma_{p}) \subset \ddPAut$
	where $q \in E[3]$ and $i \in \mathbb{Z}_{|\tau_E|}$.
	Since $\sigma_{p}(\sigma_{q}\tau_E^{i})\sigma_{p}^{-1}=\sigma_{q+p-\tau_E^{i}(p)}\tau^{i} \in \ddPAut$,
	$p-\tau_E^{i}(p) \in E[3]$. For any $j \ge 1$, we can write
	$$(\sigma_{q}\tau_E^{i}\sigma_{p})^{j}(\sigma_{p})^{-j}=\sigma_{r_{j}}\tau_E^{ji}$$
	where $r_{j}=\sum_{l=0}^{j-1}\tau_E^{li}(q)+\sum_{l=1}^{j}\tau_E^{li}(p-\tau_E^{(j-l)i}(p))$,
	and
	$$(\sigma_{q}\tau_E^{i}\sigma_{p})^{-j}(\sigma_{p})^{j}=\sigma_{s_{j}}\tau_E^{-ji}$$
	where $s_{j}=\sum_{l=1}^{j}(-\tau_E^{-l}(q))+\sum_{l=0}^{j-1}\tau_E^{-ji}(p-\tau_E^{(j-l)i}(p))$.
	By \cite[Lemma 4.19]{IM}, for any $j \ge 1$, $r_{j}, s_{j} \in E[3]$, so
	$$(\sigma_{q}\tau_E^{i}\sigma_{p})^{j}(\sigma_{p})^{-j}, (\sigma_{q}\tau_E^{i}\sigma_{p})^{-j}(\sigma_{p})^{j} \in \ddPAut,$$
	hence (3) holds.
\end{proof}
\begin{thm}\label{thm.listEC}
	Let $A=\mathcal{A}(E,\sigma_{p})$ be a $3$-dimensional standard AS-regular algebra of Type EC.
	Then the following table gives $Z(E,\sigma_{p})$ and $M(E,\sigma_{p})$;
	
		\begin{center}
		{\renewcommand\arraystretch{1.1}
			{\small
				\begin{tabular}{|c|c|p{5cm}|p{4cm}|}
					\multicolumn{4}{c}{\rm Table 4}
					\\[5pt]
					\hline
					{\rm Type}
					& $j(E)$
					& $Z(E,\sigma_{p})$
					& $M(E,\sigma_{p})$
					\\ \hline \hline
					& $j(E) \neq 0,12^3$
					& \fontsize{7pt}{12pt}\selectfont$
					\begin{cases}
						T[3] &\text{if}\,\,\,p \notin E[2] \\
						\ddPAut &\text{if}\,\,\,p \in E[2]
					\end{cases}$
					& \fontsize{7pt}{12pt}\selectfont$
					\begin{cases}
						T[3] &\text{if}\,\,\,p \notin E[6] \\
						\ddPAut &\text{if}\,\,\,p \in E[6] 
					\end{cases}$
					\\ \cline{2-4}
					{\rm EC} & $j(E)=0$
					& \fontsize{7pt}{12pt}\selectfont$
					\begin{cases}
						T[3] &\text{if}\,\,\,p \notin E[2] \\
						T[3] \rtimes \langle \tau_E^{3} \rangle &\text{if}\,\,\, p \in E[2]
					\end{cases}$
					& \fontsize{7pt}{12pt}\selectfont$
					\begin{cases}
						T[3] &\text{if}\,\,\,p \notin \mathcal{E} \cup E[6] \\
						T[3] \rtimes \langle \tau_E^{2} \rangle &\text{if}\,\,\,p \in \mathcal{E} \\
						T[3] \rtimes \langle \tau_E^{3} \rangle &\text{if}\,\,\,p \in E[6]
					\end{cases}$
					\\ \cline{2-4}
					& $j(E)=12^3$
					& \fontsize{7pt}{12pt}\selectfont$
					\begin{cases}
						T[3] &\text{if}\,\,\, p \notin E[2] \\
						T[3] \rtimes \langle \tau_E^2 \rangle &\text{if}\,\,\, p \in E[2] \setminus \langle (1,1,\la) \rangle \\
						\ddPAut &\text{if}\,\,\, p=(1,1,\la)
					\end{cases}$
					& \fontsize{7pt}{12pt}\selectfont
					$
					\begin{cases}
						T[3] &\text{if}\,\,\,p \notin E[6] \\
						T[3] \rtimes \langle \tau_E^2 \rangle &\text{if}\,\,\,p \in E[6] \setminus \mathcal{F} \\
						\ddPAut &\text{if}\,\,\,p \in \mathcal{F}
					\end{cases}$
					\\ \hline
		\end{tabular}
		}}
	\end{center}
where $\mathcal{F}:=\langle (1,1,\la) \rangle \oplus E[3]$.
\end{thm}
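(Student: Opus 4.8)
The plan is to compute $Z(E,\sigma_p)$ and $M(E,\sigma_p)$ directly from Lemma \ref{lem.MN}, treating each value of $j(E)$ separately and, within each, each of the possible elements $\sigma_q\tau_E^i$ of $\ddPAut$ according to the structure $\ddPAut = T[3] \rtimes \Aut_k(E,o)$ recorded in Table 2. Since $\ddPAut = Z(E) \rtimes G(E)$ with $Z(E) = T[3] \subset Z(E,\sigma_p)$ always (the $q$-part contributes nothing to the condition because $\sigma_p(\sigma_q\tau_E^i)\sigma_p^{-1} = \sigma_{q + p - \tau_E^i(p)}\tau_E^i$ depends on $q$ only through the translation part, which lands back in $T[3]$), both $Z(E,\sigma_p)$ and $M(E,\sigma_p) = N(E,\sigma_p)$ (by Lemma \ref{lem.MN}(3)) are of the form $T[3] \rtimes H$ for a subgroup $H \le \Aut_k(E,o)$; the whole computation reduces to determining, for each generator-power $\tau_E^i$, whether $p - \tau_E^i(p) = o$ (for $Z$) or $p - \tau_E^i(p) \in E[3]$ (for $N$), by Lemma \ref{lem.MN}(1),(2). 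I would write $\tau_E^i$ explicitly using Lemma \ref{lem.tau} in each $j$-case and compute the map $p \mapsto p - \tau_E^i(p)$ as an endomorphism of $E$.

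For $j(E) \neq 0, 12^3$ we have $\Aut_k(E,o) = \langle \tau_E \rangle \cong \mathbb{Z}/2$ with $\tau_E = [-1]$ (inversion), so $\tau_E^i$ is either the identity or $[-1]$; the condition for $i=1$ becomes $p - [-1](p) = 2p \in \{o\}$ or $\in E[3]$, i.e. $p \in E[2]$ or $p \in E[4] \cap (\text{preimage of } E[3])$. Here I should be careful: $2p \in E[3]$ means $6p = o$, i.e. $p \in E[6]$; this gives exactly the dichotomies $p \in E[2]$ (resp. $p \in E[6]$) in the table. For $j(E) = 0$, $\Aut_k(E,o) \cong \mathbb{Z}/6 = \langle \tau_E \rangle$, and I must run through $i = 1,\dots,5$; the key arithmetic fact is that $\tau_E$ acts on $E$ (with the chosen group law) as multiplication by a primitive $6$th root of unity $\zeta \in \mathbb{Z}[\omega]$ under the complex-multiplication structure, so $p - \tau_E^i(p) = (1 - \zeta^i)p$ and the conditions become $(1-\zeta^i)p = o$ or $(1-\zeta^i)p \in E[3]$. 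Since $1 - \zeta$ is a prime above $3$ in $\mathbb{Z}[\omega]$ (as $(1-\zeta)(1-\bar\zeta) = 3$ up to units) while $1 - \zeta^2, 1-\zeta^4$ are units times $(1-\omega)$ and $1-\zeta^3 = 2$ is a unit times... — the bookkeeping of which $1-\zeta^i$ is a unit, a prime above $3$, or the prime above $3$ squared is precisely what produces the three-way split $p \notin \mathcal{E} \cup E[6]$ / $p \in \mathcal{E}$ / $p \in E[6]$, and the definition of $\mathcal{E}$ (via $a^9 = b^9 = c^9$, i.e. $E[9]\setminus E[6]$-type torsion) is tuned so that $\mathcal E = \{p : (1-\zeta^2)p \in E[3]\} \setminus E[6]$ or similar. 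I would cite \cite[Lemma 4.19, Lemma 4.20]{IM} or whatever torsion-arithmetic lemmas are available there rather than rederiving the CM arithmetic. For $j(E) = 12^3$, $\Aut_k(E,o) \cong \mathbb{Z}/4 = \langle \tau_E\rangle$ with $\tau_E$ acting as multiplication by $i = \sqrt{-1} \in \mathbb{Z}[i]$; here $1 - \sqrt{-1}$ is the prime above $2$, and the relevant conditions $(1-\sqrt{-1}^{\,k})p \in \{o\}$ or $\in E[3]$ give the splits involving $E[2]$, $E[6]$, the sublattice $\langle(1,1,\lambda)\rangle$, and $\mathcal F = \langle(1,1,\lambda)\rangle \oplus E[3]$; the point $(1,1,\lambda)$ is the specific $2$-torsion point fixed (in the relevant sense) by $\tau_E^2$.

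The main obstacle I anticipate is not any single hard step but the careful endomorphism-ring arithmetic in the $j(E) = 0$ and $j(E) = 12^3$ cases: one must identify $\tau_E$ (in the explicit coordinates of Lemma \ref{lem.tau}) with the correct complex-multiplication element, track the factorizations of $1 - \zeta^i$ in $\mathbb{Z}[\omega]$ and of $1 - i^k$ in $\mathbb{Z}[i]$, and verify that the resulting torsion conditions match the sets $\mathcal{E}$, $E[6]$, $\langle(1,1,\lambda)\rangle$, $\mathcal{F}$ named in the statement — in particular checking that $\mathcal{E} \subset E[9]\setminus E[6]$ is exactly the locus where $M$ jumps but $Z$ does not. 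I would handle this by reducing everything to statements about $E[3]$ and $E[n]$ torsion via the cited lemmas of \cite{IM}, compute $p - \tau_E^i(p)$ on coordinates only when a cited result is unavailable, and organize the write-up as: (i) reduce to determining $H$ for $Z$ and for $N$ via Lemma \ref{lem.MN}; (ii) dispatch $j(E) \neq 0, 12^3$ in a line; (iii) do $j(E) = 0$ case-by-case on $i \in \{1,\dots,5\}$; (iv) do $j(E) = 12^3$ case-by-case on $i \in \{1,2,3\}$; (v) in each case read off that $M(E,\sigma_p) = N(E,\sigma_p)$ equals the asserted $T[3] \rtimes H$.
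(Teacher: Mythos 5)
Your structural reduction is exactly the paper's: by Lemma \ref{lem.MN}(3) one only needs $Z$ and $N$, and by parts (1) and (2) everything comes down to deciding, for each $i$, whether $p-\tau_E^i(p)$ is $o$ or lies in $E[3]$, with the $T[3]$ part coming along for free. The paper then simply cites \cite[Proposition 4.7]{M} and \cite[Theorem 3.8]{M} for the answers to these torsion questions, whereas you propose to rederive them via the complex-multiplication description of $\Aut_k(E,o)$. That substitution is legitimate and would make the proof self-contained, but two concrete points in your sketch need repair. First, the arithmetic in the $j(E)=0$ case is wrong as stated: for a \emph{primitive sixth} root of unity $\zeta=-\omega$ one has $1-\zeta=1+\omega=-\omega^2$, a \emph{unit} of $\mathbb{Z}[\omega]$ (indeed $(1-\zeta)(1-\bar\zeta)=2-(\zeta+\bar\zeta)=1$, not $3$); the primes above $3$ occur at $1-\zeta^2=1-\omega^2$ and $1-\zeta^4=1-\omega$, while $1-\zeta^3=2$. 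Carried through literally, your factorization would put $\tau_E$ itself into $N(E,\sigma_p)$ for $p$ in an order-$27$ subgroup, contradicting the table (only $\langle\tau_E^2\rangle$ and $\langle\tau_E^3\rangle$ ever appear). With the correct factorization, the conditions for $i=1,5$ (and for $i=2,4$ in the $Z$-computation) reduce to $p=o$ or $p\in E[3]$.

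Second, and relatedly, you never invoke the hypothesis that $A$ is of Type EC, which forces $\alpha\beta\gamma\neq 0$, i.e.\ $p\notin E[3]$ (the inflection points of the Hesse cubic). Without this exclusion the cases $i=1,2,4,5$ for $j(E)=0$ do not close up, since e.g.\ $(1-\omega)p=o$ has nontrivial solutions in $E[3]$. Once you add this, the $j(E)=0$ computation gives $\tau_E^2,\tau_E^4\in N$ exactly on $\ker\bigl((1-\omega)^3\bigr)\setminus E[3]$, an $18$-element set of points of exact order $9$, and you still owe the coordinate verification that this set equals $\mathcal{E}=\{(a,b,c)\in E\mid a^9=b^9=c^9\}$ (and the analogous identification of $(1,1,\la)$ with the generator of $\ker(1-i)$ and of $\mathcal{F}$ with $\ker\bigl(3(1-i)\bigr)$ when $j(E)=12^3$); the paper outsources precisely these identifications to \cite[Theorem 3.8]{M}, so citing that result, as you suggest as a fallback, is the efficient way to finish.
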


\begin{proof}
	By Lemma \ref{lem.MN} (3), it is enough to calculate $Z(E,\sigma_{p})$ and $N(E,\sigma_{p})$.
	By Lemma \ref{Ztwist} (2), $Z(E,\sigma_{p})$ was given in \cite[Proposition 4.7]{M}.
	The set of points satisfying $p-\tau_{E}^i(p) \in E[3]$ was given in \cite[Theorem 3.8]{M}.
	By Lemma \ref{lem.MN} (1) and (2), the result follows.
\end{proof}
Corollary \ref{thm.main} shows that in most cases
a twisting system can be replaced by an automorphism
to compute a twisted algebra.
\begin{cor}\label{thm.main}
	Let $A=\mathcal{A}(E,\sigma)$ be a $3$-dimensional non-exceptional standard AS-regular
	algebra. If $\sigma^6 \neq {\rm id}$ or $\sigma^2={\rm id}$, then $Z(E,\sigma)=M(E,\sigma)$, so
	${\rm Twist}_{\rm alg}(A)={\rm Twist}(A)$.
\end{cor}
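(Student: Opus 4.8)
The plan is to reduce the statement to the explicit tables already compiled in Theorem~\ref{thm.listNEC} and Theorem~\ref{thm.listEC} by checking, type by type, that the hypothesis ``$\sigma^6\neq\mathrm{id}$ or $\sigma^2=\mathrm{id}$'' together with non-exceptionality forces $Z(E,\sigma)=M(E,\sigma)$; the final conclusion ${\rm Twist}_{\rm alg}(A)={\rm Twist}(A)$ is then immediate from Theorem~\ref{thm.twist}, since that result identifies ${\rm Twist}(A)$ with the set of $\mathcal{A}(E,\tau|_E\sigma)$ for $\tau\in M(E,\sigma)$ and ${\rm Twist}_{\rm alg}(A)$ with the same set indexed by $\tau\in Z(E,\sigma)$. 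So the whole argument comes down to the equality $Z(E,\sigma)=M(E,\sigma)$ under the stated hypotheses.

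First I would dispatch the non-elliptic types (P, S, S', T, T', NC, CC). For types P, T, T', CC one reads off Table~3 that $Z(E,\sigma)=M(E,\sigma)$ unconditionally, so nothing is needed; this is exactly what Theorem~\ref{thm.ctt} gives for T, T', CC and what $|G(E)|=1$ gives for P. For types S, S', NC, Table~3 shows $Z(E,\sigma)=M(E,\sigma)$ precisely in the two regimes ``$\sigma^2=\mathrm{id}$'' (both equal $\ddPAut$) and ``$\sigma^6\neq\mathrm{id}$'' (both equal the appropriate subgroup $D\rtimes\langle\cdots\rangle$, resp.\ the diagonal torus, resp.\ $\langle\mathrm{diag}(1,\ep,\ep^2)\rangle$), which is exactly the hypothesis of the corollary; the only gap — $\sigma^6=\mathrm{id}$ but $\sigma^2\neq\mathrm{id}$ — is excluded. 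This handles all seven non-elliptic types.

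For type EC the work is in Theorem~\ref{thm.listEC}. Here I would compare the $Z(E,\sigma_p)$ column and the $M(E,\sigma_p)$ column of Table~4 and observe that they agree whenever $p\notin E[6]$ (both equal $T[3]$, using $\sigma_p^6\neq\mathrm{id}$) and also whenever $p$ is such that both columns give $\ddPAut$; the borderline discrepancies occur only at $p\in E[6]\setminus E[2]$ (where $Z$ gives $T[3]$ but $M$ gives $T[3]\rtimes\langle\tau_E^3\rangle$ or $T[3]\rtimes\langle\tau_E^2\rangle$), at $p\in E[2]$ when $j(E)\neq 0,12^3$ or $j(E)=12^3$, and at $p\in\mathcal{E}$ when $j(E)=0$. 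The condition $\sigma_p^6\neq\mathrm{id}$, i.e.\ $p\notin E[6]$, kills the first two families; non-exceptionality, i.e.\ $p\notin\mathcal{E}$ (up to graded Morita equivalence), kills the last. One should also note that for $p\in E[2]$ one has $\sigma_p^2=\mathrm{id}$, which is the other allowed regime, and there $Z$ and $M$ need not both be $\ddPAut$ (e.g.\ $j(E)=0$ gives $T[3]\rtimes\langle\tau_E^3\rangle$ for both) — the point is only that they \emph{coincide}, which a glance at the table confirms. Thus in every allowed case $Z(E,\sigma)=M(E,\sigma)$.

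The main obstacle is purely bookkeeping: one must be careful that ``$\sigma^6\neq\mathrm{id}$ or $\sigma^2=\mathrm{id}$'' is exactly the complement, within each type's parameter space, of the problematic locus where Tables~3 and~4 show $Z\subsetneq M$ — and, for type EC, that the \emph{only} remaining bad locus inside ``$\sigma^2=\mathrm{id}$'' or ``$\sigma^6=\mathrm{id}$'' is the exceptional set $\mathcal{E}$, so that excluding exceptional algebras really does suffice. Concretely: for $p\in E[6]\setminus E[2]$ we have $\sigma_p^6=\mathrm{id}$, $\sigma_p^2\neq\mathrm{id}$, so such $p$ are excluded by hypothesis; for $p\in\mathcal{E}$ (which lies in $E[9]\setminus E[6]$, so $\sigma_p^6\neq\mathrm{id}$, $\sigma_p^2\neq\mathrm{id}$) the hypotheses of the corollary are \emph{satisfied} yet $Z\subsetneq M$ — this is precisely why the exceptional algebras must be removed by hand, and is the one case where the dichotomy in the hypothesis is not by itself enough. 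Once this matching of loci is verified against Table~3 and Table~4, the equality $Z(E,\sigma)=M(E,\sigma)$ holds in all cases, and ${\rm Twist}_{\rm alg}(A)={\rm Twist}(A)$ follows from Theorem~\ref{thm.twist}.
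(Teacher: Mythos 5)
Your proposal is correct and follows essentially the same route as the paper: reduce the non-EC types (and EC with $j(E)\neq 0,12^3$, where $|G(E)|=2$) to Theorems \ref{thm.ctt} and \ref{thm.62zm} (equivalently, Table 3), then check the remaining EC cases against Table 4, observing that the only locus where $Z\subsetneq M$ despite the hypothesis $\sigma^6\neq\mathrm{id}$ or $\sigma^2=\mathrm{id}$ is $p\in\mathcal{E}$, which is removed by non-exceptionality. Your write-up is in fact more explicit than the paper's about the matching of the bad loci, but the substance is identical.
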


\begin{proof}
	By Theorem \ref{thm.ctt} and Theorem \ref{thm.62zm}, it is enough to show the case that
	$A=\mathcal{A}(E,\sigma_{p})$ is of Type EC such that $j(E)=0$, $p \notin \mathcal{E}$, or $j(E)=12^3$.
	
	(1) $j(E)=0$, $p \notin \mathcal{E}$: Let $E=\mathcal{V}(x^3+y^3+z^3) \subset \mathbb{P}^2$.
    By Theorem \ref{thm.listEC},
    if $p \in E[2]$ or $p \notin E[6]$, then $N(E,\sigma_{p})=M(E,\sigma_{p})=Z(E,\sigma_{p})$.
    
    (2) $j(E)=12^3$: Let $E=\mathcal{V}(x^3+y^3+z^3-3\la xyz) \subset \mathbb{P}^2$ where $\la=1+\sqrt{3}$.
    By Theorem \ref{thm.listEC},
    if $p \in E[2]$ or $p \notin E[6]$, then $N(E,\sigma_{p})=M(E,\sigma_{p})=Z(E,\sigma_{p})$.
\end{proof}
Let $E \subset \mathbb{P}^2$ be a projective variety. 
For $\tau \in \Aut_{k}\,E$, we define
$$||\tau||:=\inf\{
i \in \mathbb{N}^{+} \mid \tau^i \in \uuPAut
\}.$$

\begin{cor}\label{thm.main2}
	For every $3$-dimensional non-exceptional geometric AS-regular algebra $B$,
	there exists a $3$-dimensional standard AS-regular algebra $S$ such that
	${\rm Twist}(B)={\rm Twist}_{\rm alg}(S)$.
\end{cor}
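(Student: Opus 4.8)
The plan is to reduce the statement about an arbitrary non-exceptional geometric AS-regular algebra $B$ to the standard case handled by Theorem \ref{thm.ctt}, Theorem \ref{thm.62zm}, and Corollary \ref{thm.main}. First I would invoke Lemma \ref{lem.SP}: since $B$ is a $3$-dimensional geometric AS-regular algebra, it is graded Morita equivalent to some standard algebra $S=\mathcal{A}(E,\sigma)$ in Table $1$. By the discussion at the start of Section $4$, graded Morita equivalence means precisely $B\in{\rm Twist}(S)$, and since ${\rm Twist}$ is an equivalence-class invariant of the graded Morita class, ${\rm Twist}(B)={\rm Twist}(S)$. So it suffices to produce, from the given $S$, a standard algebra $S'$ (possibly $S$ itself) with ${\rm Twist}(S')={\rm Twist}_{\rm alg}(S')$, together with ${\rm Twist}(S)={\rm Twist}(S')$.

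The key case division is on the order of $\sigma$. If $S$ is of Type P, S, S$'$, T, T$'$, NC, or CC, then by Theorem \ref{thm.ctt} and Theorem \ref{thm.62zm} we already have $Z(E,\sigma)=M(E,\sigma)$ whenever $\sigma^6\neq{\rm id}$ or $\sigma^2={\rm id}$, and by Theorem \ref{thm.twist} this gives ${\rm Twist}(S)={\rm Twist}_{\rm alg}(S)$, so we may take $S'=S$. The remaining subcase within these types is $\sigma^6={\rm id}$ but $\sigma^2\neq{\rm id}$; here Theorem \ref{thm.62zm}(2) gives $M(E,\sigma)=N(E,\sigma)=\ddPAut$, so ${\rm Twist}(S)=\{\mathcal{A}(E,\tau|_E\sigma)\mid\tau\in\ddPAut\}/_{\cong}$. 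The idea is to choose a replacement $\sigma'=\tau_0|_E\sigma$ for a suitable $\tau_0\in\ddPAut$ so that $\mathcal{A}(E,\sigma')$ is again standard (same $E$, but with an automorphism of order dividing $2$, e.g.\ achieved by a translation/torsion adjustment), whence $\mathcal{A}(E,\sigma')$ still satisfies ${\rm Twist}(\mathcal{A}(E,\sigma'))={\rm Twist}_{\rm alg}(\mathcal{A}(E,\sigma'))$ by case (1) of Theorem \ref{thm.62zm}, and ${\rm Twist}(S)={\rm Twist}(\mathcal{A}(E,\sigma'))$ because $\sigma'$ and $\sigma$ differ by an element of $M$. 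For Type EC with $B$ non-exceptional, I would argue similarly using Theorem \ref{thm.listEC} and Corollary \ref{thm.main}: when $p\in E[2]$ or $p\notin E[6]$ we have $Z(E,\sigma_p)=M(E,\sigma_p)$ directly and take $S'=S$; when $p\in E[6]\setminus E[2]$ (so $\sigma_p$ has order $3$ or $6$), replace $p$ by a $2$-torsion point $p'$ in the same coset modulo the relevant torsion subgroup, so that $\mathcal{A}(E,\sigma_{p'})$ lies in ${\rm Twist}(S)$ and satisfies $Z(E,\sigma_{p'})=M(E,\sigma_{p'})$.

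The main obstacle I expect is the bookkeeping in the order-$6$ (and, for Type EC, order-$3$ or $6$) case: one must verify that the chosen $\tau_0$ (or torsion translation) genuinely lands inside $M(E,\sigma)$ — so that the twist does not change ${\rm Twist}$ — while simultaneously making the new geometric pair one to which Theorem \ref{thm.62zm}(1) or Corollary \ref{thm.main} applies. Concretely this means exhibiting, for each relevant type, an explicit $\tau_0\in\ddPAut=M(E,\sigma)$ with $(\tau_0|_E\sigma)^2={\rm id}$ (Types S, S$'$, NC: compositions of $\tau_E$ with a diagonal automorphism of order $3$) or, for Type EC, a point $p'\in E[2]$ with $p-p'$ in the appropriate torsion subgroup so that $\sigma_{p-p'}\in M(E,\sigma_p)$ by Lemma \ref{lem.MN}. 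Once these explicit choices are recorded, the equalities ${\rm Twist}(S)={\rm Twist}(S')={\rm Twist}_{\rm alg}(S')$ follow from Theorem \ref{thm.twist}, and combined with ${\rm Twist}(B)={\rm Twist}(S)$ the corollary is proved.
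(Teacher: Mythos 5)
Your overall strategy coincides with the paper's: reduce to a standard $A=\mathcal{A}(E,\sigma)$ via Lemma \ref{lem.SP}, dispose of the cases $\sigma^6\neq{\rm id}$ and $\sigma^2={\rm id}$ by Corollary \ref{thm.main}, and in the remaining case $\sigma^6={\rm id}$, $\sigma^2\neq{\rm id}$ replace $\sigma$ by $\tau_0|_E\sigma$ for some $\tau_0\in M(E,\sigma)$ so that the new automorphism has order dividing $2$. The gap is that this last replacement is exactly where the content of the proof lies, and you leave it as a to-do: no $\tau_0$ is actually exhibited, and the candidates you hint at are not all viable. For Types S, S$'$, NC you propose ``compositions of $\tau_E$ with a diagonal automorphism of order $3$,'' but $\tau_E$ generates $G(E)$ and composing with it generally destroys the standard form you need in order to quote Theorem \ref{thm.62zm}(1): for Type S, for instance, $\tau_E$ interchanges $\mathcal{V}(x)$ and $\mathcal{V}(y)$, so $\tau_E\sigma$ permutes the components of $E$ nontrivially, whereas every standard Type S automorphism preserves each line; hence $\mathcal{A}(E,\tau_E d\sigma)$ is not one of the pairs of Table 1 and the decomposition $Z(E)\rtimes G(E)$ with $|G(E)|=2$ is not available for it. So the argument, as sketched, would fail for the non-EC types.

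The paper closes the gap with a single uniform choice requiring no type-by-type bookkeeping: take $\tau_0=\sigma^2$, i.e.\ $S:=\mathcal{A}(E,\sigma^3)$. Since $\sigma^6={\rm id}$, Lemma \ref{lem.ztau} gives $\sigma^2\in\uuPAut=\ddPAut$; the sequence $\tau^i:=\sigma^{2i}$ satisfies $\tau^{i+1}\sigma=\sigma^{3}\tau^{i}$, so $\GrMod\,A\cong\GrMod\,S$ by Theorem \ref{thm.Mori} (equivalently, $(\sigma^2\sigma)^i\sigma^{-i}=\sigma^{2i}\in\uuPAut$ shows $\sigma^2\in M(E,\sigma)$); the pair $(E,\sigma^3)$ is again standard of the same type (only the parameter changes, e.g.\ $\alpha\mapsto\alpha^3$ for Type S); and $(\sigma^3)^2={\rm id}$ puts $S$ within the scope of Corollary \ref{thm.main}. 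Your Type EC recipe ($p\mapsto p'=3p\in E[2]$ with $p'-p=2p\in E[3]$, so $\sigma_{2p}\in T[3]\subset M(E,\sigma_p)$) is exactly this choice in additive notation, so the repair is small; but until an explicit $\tau_0$ with all three required properties --- membership in $M(E,\sigma)$, $(\tau_0|_E\sigma)^2={\rm id}$, and standardness of the resulting pair --- is recorded for every type, the proof is incomplete.
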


\begin{proof}
	By Lemma \ref{lem.SP}, there exists a $3$-dimensional non-exceptional standard AS-regular algebra
	$A=\mathcal{A}(E,\sigma)$ such that $\GrMod\,B \cong \GrMod\,A$. If $\sigma^6 \neq {\rm id}$, then
	${\rm Twist}(B)={\rm Twist}(A)={\rm Twist}_{\rm alg}(A)$ by Corollary \ref{thm.main},
	so we assume that $\sigma^6={\rm id}$. Set $\tau:=\sigma^2 \in \Aut_{k}\,E$ and $S:=\mathcal{A}(E,\sigma^3)$.
	Since $||\tau||=||\sigma^2||=|\sigma^6|=1$ by \cite[Theorem 3.4]{IMo},
	$\tau \in \uuPAut$. Since $\tau^{i+1}\sigma=\sigma^{2i+3}=\sigma^3\tau^i$ for every $i \in \mathbb{Z}$,
	$\GrMod\,A \cong \GrMod\,S$ by Theorem \ref{thm.Mori}.
	Since $(\sigma^{3})^2={\rm id}$, ${\rm Twist}(B)={\rm Twist}(A)={\rm Twist}(S)={\rm Twist}_{\rm alg}(S)$ by Corollary \ref{thm.main}.
\end{proof}
Example \ref{exm.EC} shows that even if $B \cong S^{\theta}$ for some $3$-dimensional quadratic Calabi-Yau
AS-regular algebra $S$, there may be no $\phi \in \GrAut_{k}\,S$ such that $B \cong S^{\phi}$.
We need to carefully choose $S$ in order that $B \cong S^{\phi}$ for some $\phi \in \GrAut_{k}\,S$.
\begin{exa}\label{exm.EC}
	Let $E \subset \mathbb{P}^2$ be an elliptic curve.
	Assume that $j(E) \neq 0,12^3$.
	We set three geometric algebras of Type EC;
	$B:=\mathcal{A}(E,\tau_{E}\sigma_{p})$, $A:=\mathcal{A}(E,\sigma_{p})$ and $S:=\mathcal{A}(E,\sigma_{3p})$
	where $p \in E[6] \setminus (E[2] \cup E[3])$.
	By \cite[Theorem 4.3]{IM2}, these algebras are $3$-dimensional quadratic AS-regular algebras.
	Moreover, $A$ and $S$ are standard.
	By \cite[Theorem 4.20]{IM}, $\GrMod\,B \cong \GrMod\,A \cong \GrMod\,S$.
	Since $|\sigma_{p}|=6$ and $|\sigma_{3p}|=2$, $M(E,\sigma_{3p})=Z(E,\sigma_{3p}) \neq Z(E,\sigma_{p})$ by Table 4, so
	${\rm Twist}(B)={\rm Twist}_{\rm alg}(S) \neq {\rm Twist}_{\rm alg}(A)$.
\end{exa}


\end{document}